\numberwithin{equation}{section}
\newlist{assumption}{enumerate}{1}
\setlist[assumption]{label=(\textsc{a}\arabic*)}
\crefname{assumptioni}{Assumption}{Assumptions}
\newtheorem{theorem}{Theorem}[section]
\newtheorem{proposition}{Proposition}[section]
\newtheorem{lemma}{Lemma}[section]
\newtheorem{remark}{Remark}[section]
\newcommand{\R}{\mathbb{R}}
\newcommand{\cone}{\text{cone}}
\newcommand{\norm}[1]{||#1||}
\newcommand{\proj}{\text{proj}}
\newcommand{\Lip}{\text{Lip}}
\newcommand{\qed}{\hfill\square}
\newcommand{\proofbox}{\hspace{\fill}{$\Box$}}
\newenvironment{proof}{\textbf{Proof}.}{\proofbox}
\renewenvironment{abstract}
{\par\noindent\textbf{\abstractname.}\ \ignorespaces}
{\par\medskip}
\date{}
\def\bar{\overline}
\date{}
\begin{document}

\title{Lipschitz continuity of solutions and corresponding multipliers to distributed and boundary semilinear elliptic optimal control problems with mixed pointwise control-state constraints}

\author{V. H. Nhu\footnote{
		Faculty of Fundamental Sciences, PHENIKAA University, Yen Nghia, Ha Dong, Hanoi 12116, Vietnam; email: nhu.vuhuu@phenikaa-uni.edu.vn};
	N. Q. Tuan\footnote{
		Department of Optimization and Control Theory, Institute of Mathematics, Vietnam Academy of Science and Technology, 18 Hoang Quoc Viet road, Hanoi, Vietnam and Department of Mathematics, Hanoi Pedagogical University 2, Xuan Hoa, Phuc Yen, Vinh Phuc, Vietnam; email: nguyenquoctuan@hpu2edu.vn}; 
	N. B. Giang\footnote{
		Department of Information and Technology, National University of Civil Engineering, 55 Giai
		Phong Str., Hanoi, Vietnam; email: giangnb@nuce.edu.vn}; and
	N. T. T. Huong\footnote{
		School of Applied Mathematics and
		Informatics, Hanoi University of Science and Technology, 1 Dai Co
		Viet, Hanoi, Vietnam; email: huong.nguyenthithu3@hust.edu.vn}
}

\maketitle

\medskip


\begin{abstract}
	This paper is concerned with the existence and regularity of mininizers as well as of corresponding multipliers to an optimal control problem governed by semilinear elliptic equations, in which mixed pointwise control-state constraints are considered in a quite general form and the controls act simultaneously in the domain and on the boundary. Under standing assumptions, the minimizers and the corresponding multipliers do exist. Furthermore, by applying the bootstrapping technique and establishing some calculation tools for functions in Sobolev spaces of fractional order, the optimal solutions and the associated Lagrange multipliers are shown to be Lipschitz continuous.
\end{abstract}


\noindent {\bf Key words.} 
Existence of optimal solution, regularity of optimal solution, Lipschitz regularity, optimality condition, Lagrange multiplier, semilinear elliptic equation, mixed pointwise constraint.

\noindent {\bf AMS Subject Classifications.} 49K20, 35J25

\section{Introduction}
Let $\Omega$ be a bounded domain in $\mathbb{R}^N$ with the boundary
$\Gamma$ of class $C^{1,1}$ and $N\geq 2$. We consider the following
semilinear elliptic optimal control problem with mixed pointwise
constraints:
Find a couple of control functions $\left(u,v\right) \in L^{p}\left(\Omega\right) \times L^{q}\left(\Gamma\right)$ with $p > N/2$, $q > N-1$, $p, q \geq 2$
and a corresponding state function $y\in H^1\left(\Omega\right) \cap C(\overline\Omega)$, which minimize the cost functional 
\begin{subequations}
	\makeatletter
	\def\@currentlabel{P}
	\makeatother
	\label{eq:P}
	\renewcommand{\theequation}{P.\arabic{equation}}   
	\begin{align}
		 I\left(y, u,v\right)&=\int_\Omega \left(L\left(x, y\left(x\right)\right)+\frac{\lambda_1}2|u\left(x\right)|^2+\frac{\lambda_2}{p}|u\left(x\right)|^{p}\right)dx \notag \\
		\MoveEqLeft[-6] + \int_\Gamma \left(\ell \left(x,y\left(x\right)\right) +\frac{\mu_1}2|v\left(x\right)|^2 +\frac{\mu_2}{q}|v\left(x\right)|^{q}\right)d\sigma(x), \label{eq:objective-func} \\
		\intertext{subject to}
		&\begin{cases}
			Ay + f\left(x,y\right) = u \quad &{\rm in}\ \Omega\\
			\partial_{\nu_A} y = v \quad &{\rm on}\ \Gamma
		\end{cases} \label{eq:state}\\
		\intertext{and}
		& g_1\left(x,y\left(x\right)\right) + \zeta_1\left(u\left(x\right)\right) \leq 0\ {\rm  a.e.}\ x\in\Omega,
		\label{eq:constraint-domain}\\
		& g_2\left(x',y\left(x'\right)\right) +\zeta_2\left(v\left(x'\right)\right) \leq 0\ {\rm  a.e.}\ x'\in\Gamma,
		\label{eq:constraint-boundary}
	\end{align}
\end{subequations}
where $\lambda_i$ and $\mu_i$, $i=1,2$, are positive numbers; $A$ is a uniformly elliptic operator in $\Omega$ and $\partial_{\nu_A}$ stands for the conormal-derivative associated with $A$;
$L, f : \Omega \times \mathbb R \to \mathbb R$ and $\ell :\Gamma \times \mathbb R \to \mathbb R$ are Carath\'{e}odory
functions; $g_1: \Omega \times \mathbb{R} \to \mathbb{R}$ and $g_2: \Gamma \times \mathbb{R} \to \mathbb{R}$ are continuous; $\zeta_i: \mathbb{R} \to \mathbb{R}$, $i=1,2$, are continuously differentiable and strictly monotonic. 
The specific assumptions imposed on these functions shall be presented in \cref{sec:assumption}.

It is well-known that Lagrange multipliers in the optimality conditions for optimal control problems with pointwise state constraints are measures rather than functions; see, e.g. \cite{Casas1986,Casas1993,CasasRaymondZidani1996}. However, for a practical purpose such as in numerical analysis, these multipliers need to exhibit a better regularity property, for instance, $L^d$-regularity with some $d \in [1,\infty]$ and/or $C^{0,1}$-regularity. 
In order to derive this higher regularity, the regularization approach is naturally applied. In our problem, the constraints \eqref{eq:constraint-domain} and \eqref{eq:constraint-boundary}, respectively, aim at regularizing the pointwise pure state constraints in the domain and on the boundary.
Meanwhile, the objective functional is a regularization of the corresponding quadratic form of controls.

\medskip


Let us comment on related works. We first investigate some literature on optimal control problems where the controls act only on the domain. A class of quadratic optimization problems in Hilbert spaces was investigated in \cite{Troltzsch2005}, in which pointwise box-constraints and constraints of bottleneck type were considered. The Lagrange multipliers associated with these problems were proved to exist and belong to $L^2$-spaces. An improved result on the regularity of multipliers was shown in \cite{RoschTroltzsch2006} for optimal control problems governed by semilinear elliptic equations with mixed pointwise control-state constraints of bottleneck type on the domain. 
Using the idea of linearization and application of duality theory, the authors in \cite{RoschTroltzsch2006} pointed out that the Lagrange multipliers are in fact bounded on the whole domain. Under natural assumptions, the Lipschitz continuity regularity of optimal controls and the corresponding multipliers was confirmed by R\"{o}sch and Tr\"{o}ltzsch \cite{RoschTroltzsch2007} when considering semilinear optimal control problems with mixed control-state constraints in a quite general form  for controls belonging to $L^\infty(\Omega)$-space. The key techniques exploited in \cite{RoschTroltzsch2007} are the Yosida-Hewitt theorem and bootstrapping arguments.

Regarding the optimal control problems with controls acting both in the domain and on the boundary, the research topic on the regularity of multipliers has been recently studied in \cite{GiangTuanSon2020,KienHuong2021}. 
The authors in \cite{GiangTuanSon2020} considered the mixed control-state constraints that have a general form like \cite{RoschTroltzsch2007}. There, by exploiting the representation of functional on $L^\infty$-spaces by a density in $L^1$-spaces (see \cite[Prop.~5, Chap.~8]{Ioffe}) and the bootstrapping technique, they showed the Lipschitz continuity of  Lagrange multipliers under the assumption that the mentioned optimal state is assumed to be Lipschitz continuous on the whole domain. Kien et al. \cite{KienHuong2021} investigated a special case of \eqref{eq:P} for $N\in \{2,3\}$, $p=q=4$ and $\zeta_1(t) = \epsilon_1 t + t^3, \zeta_2(t) = \epsilon_2 t + t^3$ with constants $\epsilon_1, \epsilon_2 >0$. Using the  metric projection and the bootstrapping technique, Kien et al. showed that the minimizers and the corresponding Lagrange multipliers are H\"{o}lder continuous.

\medskip
The goal of this paper is to study the Lipschitz continuity regularity of optimal solutions and the corresponding Lagrange multipliers of problem \eqref{eq:P}. To this aim, we first verify the Robinson constraint qualification \cite{Robinson1976} and thus derive the existence of Lagrange multipliers according to the classical theory of mathematical programming problems \cite{BonnansShapiro2000}. After that, as in \cite{KienHuong2021}, we employ the  metric projection and the bootstrapping technique to show the H\"{o}lder regularity. Finally, to derive a higher regularity property--the Lipschitz continuity--we need to establish some calculation tools on the Sobolev spaces of fractional order such as the chain rule (see \cref{lem:chain-rule,rem:chain-rule} below) and the product of functions (see \cref{lem:production} below). 

Our main results greatly improve the ones in \cite{KienHuong2021} and in \cite{GiangTuanSon2020}. Although the constraints are quite general and the set of assumptions is natural, we still can relax the Lipschitz continuity assumption on the considered optimal state as compared to \cite{GiangTuanSon2020}.

Finally and interestingly, let us emphasize that  $\zeta_1$ and $\zeta_2$ are in general not  affine functions with respect to $u$ and $v$, respectively. The associated Nemytskii operators are thus not differentiable as functions from $L^d$-space to itself for any $d \in [1, \infty)$ (see \cite{Troltzsch2010}). This is  in contrast to the differentiability of Nemytskii operators corresponding to mixed pointwise
constraints of optimal control problems with controls being investigated in $L^\infty$-spaces; see, e.g.   \cite{RoschTroltzsch2006,RoschTroltzsch2007,GiangTuanSon2020,Griesse2010}. Our work then extends the frequently used setting of $L^\infty$-controls to   $L^p$-controls. 
\medskip

The paper is organized as follows. This introduction ends with some notation. 
\cref{sec:control2state-OC} then gives a precise statement of all standing assumptions used in the whole paper and presents the existence of a global optimal solution to \eqref{eq:P} and a  system of corresponding optimality conditions. Finally, \cref{sec:Lipschitz-reg} is devoted to the main result--the Lipschitz continuity of optimal solutions and Lagrange multipliers.

\paragraph*{Notation.}
Given Banach spaces $X$ and $Y$, the notation $X \hookrightarrow Y$  means that $X$ is continuously embedded in $Y$, and $X \Subset Y$ indicates that $X$ is compactly embedded in $Y$. 
For a given point $u\in X$ and $\rho>0$, $B_X(u,\rho)$ and $\overline B_X(u,\rho)$ stand for the open and closed balls of radius $\rho$ centered at $u$, respectively. For a Banach space $X$, $X^*$ stands for the dual space of $X$ and $\langle \cdot, \cdot \rangle_{X^*,X}$ denotes the duality product between $X^*$ and $X$. For $t\in [1,\infty]$, by $t'$ we denote the conjugate of $t$, that is,
\begin{equation*}
	t' = \begin{cases}
	\infty & \text{if } t=1,\\
	1 & \text{if } t=\infty,\\
	\frac{t}{t-1} & \text{otherwise.}
	\end{cases}
\end{equation*}
Given a Carath\'{e}odory function $g$, with a little abuse of notation the associated Nemytskii operator defined on $L^d(\Omega)$ for some $d \in [1,\infty]$ is also named by the same symbol.
Finally, $C$ stands for a generic positive constant, which might be different at different places of occurrence. We also write, e.g., $C(\xi)$ or $C_\xi$ for a constant dependent only on the parameter $\xi$.

\section{Standing assumptions, existence of optimal solutions, and first-order optimality conditions} \label{sec:control2state-OC}

\subsection{Standing assumptions} \label{sec:assumption}
Throughout the article, we need the following standing assumptions for problem \eqref{eq:P}.

\begin{assumption}
	\item \label{ass:domain} $\Omega \subset \R^N$, $N \geq 2$ is a bounded domain with a $C^{1,1}$-boundary $\Gamma$ and there hold that $p > N/2$, $q > N-1$, $p \geq 2$, $q\geq 2$, $\lambda_i, \mu_i >0$ for all $i = 1,2$. 
	
	\item \label{ass:A-operator} 
	The operator $A$ is uniformly elliptic in $\Omega$ and is defined by 
	\[
		Ay\left(x\right)=-\sum_{i,j=1}^N D_j\left(a_{ij}\left(x\right)D_iy\left(x\right)\right)+a_0\left(x\right)y\left(x\right),
	\]
	where coefficients $a_{ij} \in C^{0,1}\left(\bar \Omega\right)$ satisfying $a_{ij}\left(x\right)=a_{ji}\left(x\right)$ for all $1 \leq i,j\leq N$, $a_0 \in L^\infty(\Omega)$, $a_0\left(x\right)\geq 0$ on $\Omega$, and $a_0(x) >0$ on a set of positive measure. Here we assume that there exists $\alpha >0$ such that
	\[
		\alpha |\xi |^2 \leq \sum_{i,j=1}^N a_{ij}\xi_i\xi_j \quad \text{for all } \xi=\left(\xi_1,\xi_2,...,\xi_N\right) \in \mathbb{R}^N.
	\]
	The symbol $\partial_{\nu_A}$ stands for the conormal-derivative associated with $A$ and is defined via
	\[ 
		\partial_{\nu_A} y\left(x\right) = \sum_{i,j=1}^N a_{ij}\left(x\right)D_iy\left(x\right)\nu_j\left(x\right)
	\] 
	with $\nu\left(x\right)=\left(\nu_1\left(x\right),...,\nu_N\left(x\right)\right)$ denoting the unit outward normal to $\Gamma$ at the point $x$. Hereafter, the measure $\sigma$ on the boundary $\Gamma$ is the usual $\left(N-1\right)$-dimensional measure induced by the parametrization (see, e.g., \cite{Kufner1977,Troltzsch2010,Evans1992}). 
	
	\item \label{ass:L-func} The functions $L:\Omega\times \mathbb R \to \mathbb R$ and $\ell:\Gamma\times \mathbb R \to \mathbb R$ are Carath\'{e}odory functions such that $L\left(x, \cdot\right)$ and $\ell\left(x', \cdot\right)$ are of class $C^1$ for each fixed $x\in\Omega$ and $x'\in\Gamma$. Furthermore, for each $M>0$, there exist positive numbers $K_{L, M}$ and $K_{\ell, M}$ such that
	\begin{align*}
		&|L\left(x,y_1\right)-L\left(x,y_2\right)| +|L_y' \left(x,y_1\right)-L_y'\left(x,y_2\right)| \leq K_{L,M}|y_1-y_2|,\\
		&|\ell\left(x',y_1\right)-\ell\left(x',y_2\right)| +|\ell_y' \left(x',y_1\right)-\ell_y'\left(x',y_2\right)| \leq K_{\ell, M}|y_1-y_2|
	\end{align*}
	for  a.e. $x \in \Omega$, $x'\in\Gamma$ and all $y_i\in\mathbb R$ satisfying $|y_i| \leq M$ with $ i=1,2$. Besides, $L\left(\cdot, 0\right)\in L^1\left(\Omega\right)$, $\ell\left(\cdot, 0\right)\in L^1\left(\Gamma\right)$, $L_y' \left(\cdot,0\right) \in L^\infty(\Omega)$, and $\ell_y' \left(\cdot,0\right) \in L^\infty(\Gamma)$. Moreover, there hold $L(x, y)\geq 0$ and $\ell(x', y)\geq 0$ for a.e. $x\in\Omega$, $x'\in\Gamma$ and all $y\in\mathbb{R}$.   
	
	\item \label{ass:f-func} The function $f:\Omega\times \mathbb R \to \mathbb R$ is a Carath\'{e}odory function such that $f\left(x, \cdot\right)$ is of class $C^1$ for each fixed $x\in\Omega$, and satisfies the property that for each $M>0$, there exists a positive number $K_{f, M}$ such that
	\begin{align*}
		&|f\left(x,y_1\right)-f\left(x,y_2\right)| +|f_y' \left(x,y_1\right)-f_y'\left(x,y_2\right)| \leq K_{f,M}|y_1-y_2|
	\end{align*}
	for  a.e. $x \in \Omega$ and all $y_i\in\mathbb R$ satisfying $|y_i| \leq M$ with $ i=1,2$. Besides, $f\left(\cdot, 0\right)=0$, $f_y'\left(x, y\right)\geq 0$ for  a.e. $x\in\Omega$ and all $y\in\mathbb{R}$. 
	
	\item \label{ass:g-funcs} The functions $g_1:\Omega\times \mathbb R \to \mathbb R$ and $g_2:\Gamma\times \mathbb R \to \mathbb R$ are continuous and for each fixed $x\in\Omega$ and $x'\in\Gamma$, $g_1\left(x, \cdot\right)$ and $g_2\left(x', \cdot\right)$ are of class $C^1$, and satisfy the property that for each $M>0$, there exist positive numbers $K_{g_1, M}$ and $K_{g_2, M}$ such that
	\begin{align*}
		&|g_1\left(x_1,y_1\right)-g_1\left(x_2,y_2\right)| +|g_{1y}'\left(x_1,y_1\right)-g_{1y}'\left(x_2,y_2\right)| \leq K_{g_1,M}\left(|x_1-x_2|+|y_1-y_2|\right),\\
		&|g_2\left(x'_1,y_1\right)-g_2\left(x'_2,y_2\right)| +|g_{2y}'\left(x'_1,y_1\right)-g_{2y}'\left(x'_2,y_2\right)| \leq K_{g_2,M}\left(|x'_1-x'_2|+|y_1-y_2|\right),
	\end{align*}
	for a.e. $x_i \in \Omega$, $x_i'\in\Gamma$ and all $y_i\in\mathbb R$ satisfying $|y_i| \leq M$ with $ i=1,2$. Furthermore, $g_1\left(\cdot, 0\right)= 0, g_2(\cdot, 0) =0$ and $g_{iy}'\left(\cdot, 0 \right)$, $i=1,2$, are $L^\infty$- functions. 
	\item \label{ass:zeta-funcs} The functions $\zeta_i:\mathbb R \to \mathbb R, i=1,2$, are strictly monotone and continuously differentiable such that $\zeta_i\left(0\right)=0$, and for each $M>0$ there exist positive numbers $K_{\zeta_i, M}$ satisfying
	\begin{align}\label{eq:lipzeta}
		&|\zeta_i\left(r_1\right)-\zeta_i\left(r_2\right)| +|\zeta'_{i}\left(r_1\right)-\zeta'_{i}\left(r_2\right)| \leq K_{\zeta_i,M}|r_1-r_2|
	\end{align}	for all $r_i \in \mathbb{R}$ with $|r_i| \leq M$ and $|\zeta_i'\left(\cdot\right)| \geq \rho_i>0$, $i=1,2$. Furthermore, there hold
	\begin{equation} \label{eq:Robinson-needed}
		\left\{
		\begin{aligned}
			& f_y'\left(x,\cdot\right)+\dfrac{g_{1y}'\left(x,\cdot\right)}{\zeta_1' \left( H_1\left(-g_1\left(x,\cdot\right)\right)\right)}\geq 0\\
			& \dfrac{g_{2y}'\left(x',\cdot\right)}{\zeta_2' \left( H_2\left(-g_2\left(x',\cdot\right)\right)\right)}\geq 0
		\end{aligned}
		\right.
	\end{equation} for a.e. $x\in\Omega$ and $x' \in \Gamma$, where $H_i$ is the inverse of $\zeta_i$, $i=1,2$.
\end{assumption}

\medskip

We finish this subsection with presenting the definition of the Sobolev spaces of fractional order on the boundary $\Gamma$; see. e.g. \cite{Adams,Kufner1977}. Let $W^{\tau,k}(\Gamma)$ with $\tau \in (0,1)$ and $k\geq 1$ be the subspace of all functions $v \in L^k(\Gamma)$ such that
\begin{equation} \label{eq:fractional-term}
	I_{\tau, k}(v) := \iint_{\Gamma \times\Gamma} \frac{|v(x)-v(x')|^k}{|x-x'|^{N-1+\tau k}}d\sigma(x)d\sigma(x') < \infty 
\end{equation}
equipped with the norm
\begin{equation*}
	\norm{v}_{W^{\tau,k}(\Gamma)} := \left\{ \norm{v}_{L^k(\Gamma)}^k +  I_{\tau, k}(v) \right\}^{\frac{1}{k}}. 
\end{equation*}
For $\tau \in (0,1)$ and $k >1$, we use the symbol $W^{-\tau,k}(\Gamma)$ for the dual space to $W^{\tau,k'}(\Gamma)$.

From now on, let $\gamma$ denote the trace operator from $W^{1,k}(\Omega)$ onto $W^{1-\frac{1}{k},k}(\Gamma)$ for $k>1$; see, e.g., \cite[Chap.~6]{Kufner1977}.

\subsection{The control-to-state operator} \label{sec:control2state-oper}

According to \cite[Thm.~3.1]{Casas1993}, there exists for each couple $(u,v) \in L^p(\Omega) \times L^q(\Gamma)$ a unique solution $y =y_{u,v} \in H^1(\Omega) \cap C(\overline\Omega)$  to \eqref{eq:state}. This defines a mapping, denoted by $S$, from $L^p(\Omega) \times L^q(\Gamma)$ to $H^1(\Omega) \cap C(\overline\Omega)$ by $S(u,v) = y_{u,v}$. Moreover, there is a constant $c_\infty$ independent of $u,v$, and $f$ such that
\begin{equation}
	\label{eq:apriori-esti-state}
	\norm{S(u,v)}_{H^1(\Omega)} + \norm{S(u,v)}_{C(\overline\Omega)} \leq c_\infty \left[ \norm{u}_{L^p(\Omega)} + \norm{v}_{L^q(\Gamma)} \right]
\end{equation}
(see, e.g., \cite[Thm. 4.7]{Troltzsch2010}). On the other hand, since $p > N/2$ and $q > N-1$, the embeddings $L^p(\Omega) \Subset (W^{1,r'}(\Omega))^*$ and $L^q(\Gamma) \Subset W^{-\frac{1}{r},r}(\Gamma)$ are compact for some $r > N$. From this and \cite[Lem.~2.4]{CasasMateos2002}, we derive the following implication
\begin{equation*}
	\left\{
	\begin{aligned}
		& u_k \rightharpoonup u \quad \text{in } L^p(\Omega)\\
		& v_k \rightharpoonup u \quad \text{in } L^q(\Gamma)
	\end{aligned}
	\right. 
	\, \implies \, S(u_k, v_k) \to S(u,v) \, \text{in } W^{1,r}(\Omega) \, \text{and thus in } H^1(\Omega) \cap C(\overline\Omega).
\end{equation*}
In conclusion, we have the following.
\begin{lemma}[cf. {\cite[Thm.~3.1]{Casas1993}}]
	\label{lem:control2state-compact}
	The control-to-state operator $S: L^p(\Omega) \times L^q(\Gamma) \to H^1(\Omega) \cap C(\overline\Omega)$ is completely continuous.
\end{lemma}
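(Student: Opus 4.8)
The plan is to establish complete continuity directly from its definition, namely that $S$ carries every weakly convergent sequence of controls to a strongly convergent sequence of states. Thus I would fix a sequence $(u_k,v_k)$ with $u_k \rightharpoonup u$ in $L^p(\Omega)$ and $v_k \rightharpoonup v$ in $L^q(\Gamma)$, take $(u,v)$ to be the weak limit, and aim to show $S(u_k,v_k) \to S(u,v)$ strongly in $H^1(\Omega) \cap C(\overline{\Omega})$.

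First, I would pass from weak convergence of the data to strong convergence in weaker spaces by exploiting compactness. Since $p > N/2$ and $q > N-1$, one can select an exponent $r > N$ for which the embeddings $L^p(\Omega) \Subset (W^{1,r'}(\Omega))^*$ and $L^q(\Gamma) \Subset W^{-\frac{1}{r},r}(\Gamma)$ are compact; verifying these two compact embeddings, where the strict inequalities on $p$ and $q$ enter, is the first technical point. A compact embedding sends weakly convergent sequences to norm-convergent ones, so $u_k \to u$ in $(W^{1,r'}(\Omega))^*$ and $v_k \to v$ in $W^{-\frac{1}{r},r}(\Gamma)$.

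Next, I would transfer this strong convergence of the data to the states. The a priori bound \eqref{eq:apriori-esti-state} already confines all $S(u_k,v_k)$ to a fixed ball of $C(\overline{\Omega})$, which controls the nonlinearity $f$ on the relevant range. I would then invoke the continuity estimate for the control-to-state map recorded in \cite[Lem.~2.4]{CasasMateos2002}, measured between the negative-order Sobolev norms of the data and the $W^{1,r}(\Omega)$ norm of the state, to conclude $S(u_k,v_k) \to S(u,v)$ strongly in $W^{1,r}(\Omega)$.

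Finally, since $r > N \geq 2$, the Sobolev embeddings $W^{1,r}(\Omega) \hookrightarrow C(\overline{\Omega})$ and $W^{1,r}(\Omega) \hookrightarrow H^1(\Omega)$ are continuous, so $W^{1,r}$-convergence upgrades to strong convergence in $H^1(\Omega) \cap C(\overline{\Omega})$, which is the assertion. The main obstacle is the middle step: the continuity of the nonlinear solution operator with respect to the weak, negative-order norms of the data rather than their $L^p$- and $L^q$-norms. This rests on the linear $W^{1,r}$-regularity theory for $A$ with conormal boundary conditions together with the monotonicity of $f$ and the uniform $C(\overline{\Omega})$-bound; citing \cite[Lem.~2.4]{CasasMateos2002} is precisely what makes this routine here.
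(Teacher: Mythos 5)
Your proposal is correct and follows essentially the same route as the paper: the compact embeddings $L^p(\Omega) \Subset (W^{1,r'}(\Omega))^*$ and $L^q(\Gamma) \Subset W^{-\frac{1}{r},r}(\Gamma)$ for some $r>N$ convert weak convergence of the controls into strong convergence of the data, and \cite[Lem.~2.4]{CasasMateos2002} then gives $S(u_k,v_k)\to S(u,v)$ in $W^{1,r}(\Omega)$, hence in $H^1(\Omega)\cap C(\overline\Omega)$. The only difference is that you spell out the a priori bound and the final Sobolev embedding step, which the paper leaves implicit.
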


The following lemma provides the continuous differentiability of $S$ and the regularity of solutions to the linearization equation of the state equation. 
\begin{lemma}
	\label{lem:control2state-diff}
	The control-to-state operator $S: L^p(\Omega) \times L^q(\Gamma) \to H^1(\Omega) \cap C(\overline\Omega)$ is continuously differentiable in any $(u,v) \in L^p(\Omega) \times L^q(\Gamma)$. Furthermore, for any $(u,v), (\tilde{u},\tilde{v}) \in L^p(\Omega) \times L^q(\Gamma)$, $w := S'(u,v)(\tilde u,\tilde v)$ uniquely satisfies the equation
	\begin{equation}
		\label{eq:diff-S}
		\left\{
		\begin{aligned}
			Aw + f'_y(x, y_{u,v})w &= \tilde{u} \quad \text{in } \Omega,\\
			\partial_{\nu _A} w & = \tilde{v} \quad \text{on } \Gamma
		\end{aligned}
		\right.
	\end{equation}
	with $y_{u,v} := S(u,v)$.	
	Moreover, for any  $z=(u,v), \tilde{z} = (\tilde{u},\tilde{v}) \in L^p(\Omega) \times L^q(\Gamma)$, there holds
	\begin{equation}
		S'(z)\tilde{z} \in W^{1,r}(\Omega)
	\end{equation}
	with
	\begin{equation} \label{eq:exponent-r}
		r = \begin{cases}
			\min\left\{ \frac{pN}{N-p}, \frac{Nq}{N-1} \right \} &\quad \text{if } \frac{N}{2} < p < N,\\
			\frac{Nq}{N-1}& \quad \text{if } p \geq N.
		\end{cases}
	\end{equation}
	%
\end{lemma}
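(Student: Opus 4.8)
The plan is to obtain the $C^1$-regularity of $S$ and the characterization \eqref{eq:diff-S} of its derivative via the implicit function theorem, and then to upgrade the solution of the linearized equation to $W^{1,r}(\Omega)$ by elliptic regularity together with careful Sobolev/trace bookkeeping. First I would set up the implicit function theorem by introducing the residual map $\Phi$ associated with the state equation \eqref{eq:state}, sending $(y,u,v) \in (H^1(\Omega)\cap C(\overline\Omega)) \times L^p(\Omega) \times L^q(\Gamma)$ to the pair formed by $Ay + f(\cdot, y) - u$ (read weakly, in a suitable dual/data space) and $\partial_{\nu_A} y - v$, so that $\Phi(y,u,v) = 0$ if and only if $y = S(u,v)$. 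The crucial analytic input is that the Nemytskii operator $y \mapsto f(\cdot, y)$ is continuously Fr\'{e}chet differentiable as a map from $C(\overline\Omega)$ (equivalently, restricted to the bounded states produced by $S$) into $L^\infty(\Omega) \hookrightarrow L^p(\Omega)$, with derivative the multiplication operator $h \mapsto f'_y(\cdot, y)h$. This is exactly where Assumption \ref{ass:f-func} enters: since $f(x,\cdot)$ is $C^1$ with locally Lipschitz derivative $f'_y$, the superposition operator is differentiable in the $L^\infty$-setting (note that it would fail in an $L^p$-to-$L^p$ setting, precisely the phenomenon flagged in the introduction for the $\zeta_i$). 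Because the linear part $y \mapsto (Ay, \partial_{\nu_A} y)$ is bounded, $\Phi$ is $C^1$ and its partial derivative in $y$ at $(y_{u,v},u,v)$ is the linearized operator $w \mapsto (Aw + f'_y(\cdot, y_{u,v})w, \partial_{\nu_A} w)$.

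Second, I would verify that this partial derivative is a Banach-space isomorphism. Since $f'_y(x,\cdot) \geq 0$ by Assumption \ref{ass:f-func} and $a_0 \geq 0$ with $a_0 > 0$ on a set of positive measure by Assumption \ref{ass:A-operator}, the linearized problem \eqref{eq:diff-S} is uniformly elliptic with a nonnegative zeroth-order coefficient, hence uniquely solvable for every right-hand side $(\tilde u,\tilde v) \in L^p(\Omega) \times L^q(\Gamma)$, with the a priori bound \eqref{eq:apriori-esti-state} applied to the linear equation guaranteeing that the solution lies in $H^1(\Omega)\cap C(\overline\Omega)$ and depends continuously on the data. This invertibility lets the implicit function theorem conclude that $S$ is continuously differentiable and that $w = S'(u,v)(\tilde u,\tilde v)$ is precisely the unique solution of \eqref{eq:diff-S}, obtained by formally differentiating \eqref{eq:state}.

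Finally, for the $W^{1,r}(\Omega)$-regularity I would apply the elliptic (Calder\'{o}n--Zygmund) regularity theory for the conormal problem \eqref{eq:diff-S} with $c(x) := f'_y(\cdot, y_{u,v}) \in L^\infty(\Omega)$, interior datum $\tilde u \in L^p(\Omega)$ and boundary datum $\tilde v \in L^q(\Gamma)$, invoking \cite[Lem.~2.4]{CasasMateos2002} in the spirit already used before \cref{lem:control2state-compact}. The exponent in \eqref{eq:exponent-r} is then dictated by two competing Sobolev effects: the interior datum $\tilde u \in L^p(\Omega)$ produces, via $W^{2,p}(\Omega)\hookrightarrow W^{1,pN/(N-p)}(\Omega)$ when $N/2 < p < N$ and full regularity when $p \geq N$, a gradient integrability governed by the Sobolev conjugate $pN/(N-p)$, while the boundary datum $\tilde v \in L^q(\Gamma)$ contributes the exponent $Nq/(N-1)$ through the trace theorem. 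Taking the smaller of the two in the subcritical range and keeping only the boundary-limited exponent when $p \geq N$ reproduces exactly \eqref{eq:exponent-r}. The main obstacle is this last bookkeeping: one must track the interplay between the interior $L^p$-data, the boundary $L^q$-data, the trace and embedding thresholds, and the standing requirement $r > N$, making sure the two contributions are combined correctly to recover the stated $r$ rather than merely some nonoptimal exponent.
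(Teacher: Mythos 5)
Your proposal is correct and takes essentially the same route as the paper: the differentiability claim is handled by the standard implicit-function-theorem argument that the paper simply delegates to \cite[Chap.~4]{Troltzsch2010} and \cite[Thm.~3.3]{Casas1993}, and the $W^{1,r}$-regularity rests on \cite[Lem.~2.4]{CasasMateos2002}, exactly the engine the paper uses. The only cosmetic difference is the exponent bookkeeping: the paper embeds the data into negative-order spaces, $L^p(\Omega) \hookrightarrow \left(W^{1,r_1'}(\Omega)\right)^*$ with $r_1 = \frac{Np}{N-p}$ (any $r_1 \geq 1$ if $p \geq N$) and $L^q(\Gamma) \hookrightarrow W^{-\frac{1}{r_2},r_2}(\Gamma)$ with $r_2 = \frac{Nq}{N-1}$, then sets $r = \min\{r_1,r_2\}$, whereas you motivate the interior exponent via $W^{2,p}(\Omega) \hookrightarrow W^{1,\frac{pN}{N-p}}(\Omega)$ --- the same numerology by duality, and harmless since the lemma you ultimately invoke is the same one.
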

\begin{proof}
	The proof of the differentiability of $S$ is elementary and is similar to that in \cite[Chap.~4]{Troltzsch2010} (see also \cite[Thm.~3.3]{Casas1993}). We now show the regularity of solutions to \eqref{eq:diff-S}. To this end, we see from the Sobolev embeddings \cite{Adams,Kufner1977} that
	\begin{align*}
		&L^p(\Omega) \hookrightarrow \left( W^{1,r_1'}(\Omega)\right)^* \quad  \text{for } r_1 = \frac{Np}{N-p} \, \text{if } \frac{N}{2} < p < N; r_1 \geq 1  \, \text{if } p \geq N\\
		\intertext{and}
		& L^q(\Gamma) \hookrightarrow  W^{-\frac{1}{r_2}, r_2} (\Gamma)\quad \text{for } r_2 = \frac{Nq}{N-1}.
	\end{align*}
	Setting $r := \min\{r_1, r_2\}$ yields \eqref{eq:exponent-r}, $\tilde{u} \in  \left( W^{1,r'}(\Omega)\right)^*$ and $\tilde{v} \in W^{-\frac{1}{r}, r}(\Gamma)$. \cite[Lem.~2.4]{CasasMateos2002} then gives the desired conclusion.
\end{proof}

\subsection{Existence of optimal solution}
From now on, we denote by $\Phi$ the set of all couples $(u,v) \in L^p(\Omega) \times L^q(\Gamma)$ that together with $y:= S(u,v)$ satisfy \eqref{eq:constraint-domain} and \eqref{eq:constraint-boundary}, i.e.,
\begin{equation}
	\label{eq:admissible-set}
	\Phi := \left\{(u,v) \in L^p(\Omega) \times L^q(\Gamma) \mid (S(u,v),u,v) \, \text{satisfies  \eqref{eq:constraint-domain}--\eqref{eq:constraint-boundary}}\right\}.
\end{equation}

Although the set of admissible points of \eqref{eq:P} might not be bounded, the existence of global minimizers of \eqref{eq:P} is guaranteed due to the coercivity and weak lower semicontinuity of the cost functional in control variables.  
\begin{lemma} \label{lem:existence-minimizer}
	Under \crefrange{ass:domain}{ass:zeta-funcs}, problem \eqref{eq:P}   admits at least one global optimal solution.	
\end{lemma}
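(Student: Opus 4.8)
The plan is to use the direct method of the calculus of variations, exploiting the coercivity of the cost functional in the control variables and the complete continuity of the control-to-state operator established in \cref{lem:control2state-compact}. First I would check that problem \eqref{eq:P} is feasible, i.e.\ that $\Phi \neq \emptyset$. Since $\zeta_i(0)=0$, $g_i(\cdot,0)=0$, and the state associated with $(u,v)=(0,0)$ is $y_{0,0}=0$ (because $f(\cdot,0)=0$, so $y=0$ solves \eqref{eq:state} with zero data), the constraints \eqref{eq:constraint-domain}--\eqref{eq:constraint-boundary} reduce to $0 \leq 0$ at $(u,v)=(0,0)$; hence $(0,0)\in\Phi$ and the admissible set is nonempty. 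In particular the infimum $m := \inf_{(u,v)\in\Phi} I(S(u,v),u,v)$ is a well-defined real number, and by \crefrange{ass:L-func}{ass:zeta-funcs} (nonnegativity of $L,\ell$ and positivity of $\lambda_i,\mu_i$) we have $m \geq 0 > -\infty$.

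Next I would take a minimizing sequence $(u_k,v_k)\in\Phi$ with $I(S(u_k,v_k),u_k,v_k)\to m$. The key point is coercivity: the terms $\frac{\lambda_1}{p}\norm{u_k}_{L^p(\Omega)}^p$ and $\frac{\mu_2}{q}\norm{v_k}_{L^q(\Gamma)}^q$ (together with the nonnegativity of $L$ and $\ell$) bound $\norm{u_k}_{L^p(\Omega)}$ and $\norm{v_k}_{L^q(\Gamma)}$ uniformly in $k$. Since $p,q>1$, the spaces $L^p(\Omega)$ and $L^q(\Gamma)$ are reflexive, so after passing to a subsequence (not relabeled) we obtain $u_k \rightharpoonup \bar u$ weakly in $L^p(\Omega)$ and $v_k \rightharpoonup \bar v$ weakly in $L^q(\Gamma)$. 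By the complete continuity of $S$ asserted in \cref{lem:control2state-compact}, the corresponding states converge strongly: $y_k := S(u_k,v_k) \to \bar y := S(\bar u,\bar v)$ in $H^1(\Omega)\cap C(\overline\Omega)$.

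I would then verify that the limit $(\bar u,\bar v)$ is admissible and that $I$ is weakly sequentially lower semicontinuous. For admissibility, strong (uniform) convergence $y_k \to \bar y$ in $C(\overline\Omega)$ together with continuity of $g_1,g_2$ passes the pointwise constraints to the limit; the delicate part is handling the $\zeta_i$ terms, since $(u_k,v_k)$ converge only weakly. Here one argues that $\Phi$ is convex and closed, hence weakly closed, using the Robinson-qualification structure \eqref{eq:Robinson-needed} and strict monotonicity of $\zeta_i$ (so $H_i$ is monotone), which guarantee that the maps $u\mapsto g_1(x,y_{u,v})+\zeta_1(u)$ and $v\mapsto g_2(x',y_{u,v})+\zeta_2(v)$ respect the convex constraint set in the appropriate sense. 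For the lower semicontinuity of $I$, the state-dependent terms $\int_\Omega L(x,y_k)\,dx$ and $\int_\Gamma \ell(x',y_k)\,d\sigma$ converge by the strong convergence of $y_k$ and continuity of $L,\ell$, while the control terms $\frac{\lambda_i}{\cdot}\norm{\cdot}^{\cdot}$ are convex and continuous, hence weakly lower semicontinuous; thus $I(\bar y,\bar u,\bar v)\leq \liminf_k I(y_k,u_k,v_k)=m$. Combined with $(\bar u,\bar v)\in\Phi$ this forces $I(\bar y,\bar u,\bar v)=m$, so $(\bar u,\bar v)$ is a global minimizer.

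The main obstacle I anticipate is the weak closedness of the admissible set $\Phi$, specifically passing the nonlinear constraints \eqref{eq:constraint-domain}--\eqref{eq:constraint-boundary} through the merely weak convergence of the controls. While the state $\bar y$ is controlled by strong $C(\overline\Omega)$-convergence, the presence of the genuinely nonlinear terms $\zeta_1(u_k)$ and $\zeta_2(v_k)$ means one cannot simply pass to the limit pointwise. The cleanest resolution is to express each constraint equivalently via the inverse $H_i=\zeta_i^{-1}$, writing, e.g., \eqref{eq:constraint-domain} as $u(x)\leq H_1(-g_1(x,y(x)))$ wherever $\zeta_1$ is increasing, and then invoke Mazur's lemma to produce strongly convergent convex combinations of $(u_k,v_k)$ that inherit admissibility; the monotonicity hypotheses on $\zeta_i$ and the sign conditions \eqref{eq:Robinson-needed} are precisely what make this reformulation and the convexity argument go through.
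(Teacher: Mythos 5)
Your overall strategy is exactly the paper's: feasibility of $(0,0)$, coercivity of the control terms giving a bounded minimizing sequence, reflexivity of $L^p(\Omega)\times L^q(\Gamma)$, complete continuity of $S$ (\cref{lem:control2state-compact}) giving strong convergence of the states, weak lower semicontinuity of the cost, and passage to the limit in the constraints via the inverses $H_i=\zeta_i^{-1}$. The paper's constraint argument is precisely your final-paragraph version: for $\zeta_1$ increasing, \eqref{eq:constraint-domain} along the sequence is equivalent to $u_k - H_1(-g_1(\cdot,y_k))\le 0$; since $y_k\to\tilde y$ uniformly, $H_1(-g_1(\cdot,y_k))\to H_1(-g_1(\cdot,\tilde y))$ uniformly, so $u_k-H_1(-g_1(\cdot,y_k))$ converges weakly in $L^p(\Omega)$ to $\tilde u-H_1(-g_1(\cdot,\tilde y))$, and the weak closedness of the convex closed cone $\{w\in L^p(\Omega) \mid w\le 0 \text{ a.e.}\}$ (equivalently, Mazur's lemma) yields the constraint in the limit; the decreasing case is symmetric.

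Two points in your write-up need correcting, though neither breaks the proof once that final argument is adopted. First, the claim that ``$\Phi$ is convex and closed, hence weakly closed'' is false in general: $\Phi$ is defined through the nonlinear control-to-state map $S$ and the nonlinear functions $\zeta_i$, $g_i$, so there is no reason for it to be convex, and no convexity of $\Phi$ is available or needed. What is convex (and closed, hence weakly closed) is the negative cone in $L^p(\Omega)$; the entire purpose of rewriting the constraint with $H_1$ and exploiting the uniform convergence of $H_1(-g_1(\cdot,y_k))$ is to reduce closedness of the nonconvex set $\Phi$ along this particular sequence to weak closedness of that cone. Relatedly, ``convex combinations of $(u_k,v_k)$ that inherit admissibility'' is loose: convex combinations do not stay in $\Phi$; they only inherit the one-sided inequality against the right-hand sides $H_1(-g_1(\cdot,y_{k_j}))$, which converge uniformly to $H_1(-g_1(\cdot,\tilde y))$ --- that is what the Mazur argument actually uses. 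Second, the sign condition \eqref{eq:Robinson-needed} plays no role in the existence proof; it is needed only for Robinson's constraint qualification and hence for the existence of multipliers in \cref{thm:OC}. Here one uses only the strict monotonicity and continuity of $\zeta_i$, so that $H_i$ exists and is monotone, with the direction of the resulting inequality handled by the case distinction.
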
 
\begin{proof}
	Problem \eqref{eq:P} can be written in the form 
	\begin{equation*}
		\begin{cases}
			I\left(S(z), z\right)\to \inf,\\
			z\in \Phi\subset L^p(\Omega) \times L^q(\Gamma),
		\end{cases}
	\end{equation*} 
	where $z :=(u,v) \in L^p(\Omega) \times L^q(\Gamma)$.
	Since $\zeta_i\left(0\right)=0$ and $g_i(\cdot,0)=0$ with $i=1,2$, we have $\left(0, 0\right)\in\Phi$.
	Set $\xi:=\inf_{z\in\Phi}I\left(z\right)$. From \cref{ass:L-func}, $\xi$ is finite and
	\[
		0\leq \xi \leq \int_\Omega L(x, 0)dx +\int_\Gamma \ell(x', 0)ds =:\delta<+\infty.
	\]
	By definition, there exists a sequence $\{z_k:=(u_k, v_k)\} \subset \Phi$ such that $\xi=\lim_{k\to\infty} I(S(z_k),z_k)$. There then holds that
	\begin{equation*}
		I(S(z_k), z_k) < \delta + C
	\end{equation*}
	for some constant $C>0$ and for all $k \geq 1$. 
	Hence there is a constant $R>0$ such that
	\[ 
		\|u_k\|_{L^{p}\left(\Omega\right)}+ \|v_k\|_{L^{q}\left(\Gamma\right)} \leq R,\quad \text{for all } k\geq 1.
	\] 
	Thanks to the reflexivity of $L^p(\Omega) \times L^q(\Gamma)$, there exists a subsequence, denoted in the same way, of $\{(u_k,v_k)\}$ such that
	\[
		u_k \rightharpoonup \tilde{u} \quad \text{in } L^p(\Omega) \quad \text{and} \quad v_k \rightharpoonup \tilde{v} \quad \text{in } L^q(\Gamma)
	\]
	for some $(\tilde{u},\tilde{v}) \in L^p(\Omega) \times L^q(\Gamma)$. By virtue of \cref{lem:control2state-compact}, we have
	\[
		y_k := S(u_k, v_k) \to S(\tilde{u},\tilde{v})=: \tilde{y} \quad \text{in } H^1(\Omega) \times C(\overline\Omega).
	\]
	It remains to show that
	\begin{equation}
		\label{eq:feasible-point}
		(\tilde u,\tilde v) \in \Phi,
	\end{equation}
	which together with the weak lower semicontinuity of $I(S(\cdot),\cdot)$ yields
	\[
		\xi=\lim_{k\to\infty} I\left(S(z_k), z_k\right)\geq I\left(\tilde{y}, \tilde{z}\right)
	\]
	with $z_k := (u_k, v_k)$ and $\tilde{z} := (\tilde{u}, \tilde{v})$. Thus $(\tilde{y}, \tilde{u}, \tilde{v})$ is a global optimal solution to \eqref{eq:P}.
	
	To prove \eqref{eq:feasible-point}, we first validate that
	\begin{equation} \label{eq:constraint-domain-1}
		\zeta_1 \left(\tilde u\left(x\right)\right) +g_1\left(x,\tilde y\left(x\right)\right)\leq 0 \quad \text{for a.e. } x \in \Omega.
	\end{equation}
	To this end, we exploit the fact that $z_k\in\Phi$ to get
	\begin{equation} \label{eq:constraint-domain-2}
		\zeta_1\left(u_k\left(x\right)\right) +g_1\left(x, y_k\left(x\right)\right)\leq 0 \quad \text{a.e. } x \in \Omega \quad \text{and for all } k \geq 1.
	\end{equation} 
	Since $\zeta_1$ is strictly monotone and differentiable, so is its inverse $H_1$. We now consider two cases:
	\begin{enumerate}[label=(\roman*)]
		\item \emph{$\zeta_1$ is strictly increasing.} In this case, $H_1$ is also monotonically increasing. The constraint \eqref{eq:constraint-domain-2} can be rewritten as
		\[
			\zeta_1\left(u_k\left(x\right)\right)\leq -g_1\left(x,y_k\left(x\right)\right) \quad \text{for all } k\geq 1.
		\]
		Equivalently, one has
		\[
			u_k\left(x\right) \leq H_1 [-g_1\left(x,y_k\left(x\right)\right) ] \quad \text{for all } k\geq 1.
		\]
		That means
		\[
			u_k\left(x\right) - H_1 [ -g_1\left(x,y_k\left(x\right)\right) ]\leq 0 \quad \text{for all } k\geq 1.
		\]
		Letting $k \to \infty$ and using the weak closedness of  the set $\{ u\in L^{p}\left(\Omega\right) | u\left(x\right) \leq 0 \, \text{a.e. } x \in \Omega\}$ in $L^{p}\left(\Omega\right)$, we get	
		\[
			\tilde u\left(x\right) - H_1 [ -g_1\left(x,\tilde y\left(x\right)\right) ] \leq 0.	
		\]	 
		From this, we have \eqref{eq:constraint-domain-1}.
		\item \emph{$\zeta_1$ is strictly decreasing.} In this case, $H_1$ is also monotonically decreasing.
		Similarly as (i), we also have \eqref{eq:constraint-domain-1}.
	\end{enumerate}
	The same argument as above implies that
	\begin{equation}\label{eq:constraint-boundary-1}
		g_2\left(x', \bar y\left(x'\right)\right) +\zeta_2\left(\tilde v\left(x'\right)\right)\leq 0\ {\rm a.e.}\ x'\in\Gamma. 
	\end{equation}
    The combination of \eqref{eq:constraint-domain-1} with \eqref{eq:constraint-boundary-1} yields \eqref{eq:feasible-point}.
\end{proof}

\subsection{Mathematical programming problem corresponding to \eqref{eq:P}}
In this subsection we shall first transfer problem \eqref{eq:P} to a mathematical programming problem of the form
\begin{equation}
	\label{eq:P-MP}
	\tag{P*}
	\min J(z) \quad \text{subject to } G(z) \in K,
\end{equation}
where 
the functional $J$, the mapping $G$ and the convex closed convex $K$ will be determined later. 
The corresponding optimality system at any local minimizer of \eqref{eq:P-MP} will be then derived and it will help us establish the optimality conditions for \eqref{eq:P}.

\medskip

Set 
\begin{align*}
	& Z := L^p(\Omega) \times L^q(\Gamma), \quad E:= L^p(\Omega) \times L^q(\Gamma),\\
	& K_\Omega := \left\{ u \in L^p(\Omega) \mid u \geq 0 \, \text{a.e. in } \Omega \right\},\\
	&  K_\Gamma := \left\{ v \in L^q(\Gamma) \mid v \geq 0 \, \text{a.e. in } \Gamma \right\}\\
		\intertext{and}
	& K = (K_1, K_2) := 
	\begin{cases}
		(-K_\Omega) \times (-K_\Gamma)  & \text{if $\zeta_1', \zeta_2' \geq 0$}\\
		 (-K_\Omega) \times K_\Gamma &\text{if $\zeta_1' \geq 0, \zeta_2' \leq 0$}\\
		 K_\Omega \times (-K_\Gamma) & \text{if $\zeta_1' \leq 0, \zeta_2' \geq 0$}\\
		 K_\Omega \times K_\Gamma & \text{if $\zeta_1', \zeta_2' \leq 0$}.
	\end{cases}
\end{align*}
For any $z = (u,v) \in Z$ and $y := S(z)$, we define 
\begin{align*}
	& J(z) := I(S(z),z),\\
	& G_1(z) := u - H_1 [ -g_1\left(\cdot,y\right) ],\\
	& G_2(z) := v - H_2 [ -g_2\left(\cdot,y\right) ]\\
	\intertext{and}
	& G(z) := (G_1(z), G_2(z)).
\end{align*}
Obviously, problem \eqref{eq:P} is transferred to \eqref{eq:P-MP}. Moreover, the Lagrangian associated with \eqref{eq:P-MP} (and with \eqref{eq:P}) is defined via
\begin{align}
	\mathcal{L}(z; e) & = J(z) + \langle e, G(z) \rangle_{E^*,E} \notag \\
	& = J(z) + \langle e_1, G_1(z) \rangle_{L^{p'}(\Omega), L^p(\Omega)} + \langle e_2, G_2(z) \rangle_{L^{q'}(\Gamma), L^q(\Gamma)}  \notag \\
	& = J(z) + \int_\Omega e_1\left( u - H_1 [ -g_1\left(\cdot,y\right) ] \right)dx + \int_\Gamma e_2 \left(  v - H_2 [ -g_2\left(\cdot,y\right) ]\right)d\sigma(x) \label{eq:Lagrangian-func}
\end{align}
with $z \in Z$, $e=(e_1,e_2) \in E^*$ and $y := S(z)$.
\medskip

We now arrive at the differentiability of $J$ and $G$.
\begin{lemma}
	\label{lem:diff-j-G}
	Assume that \crefrange{ass:domain}{ass:zeta-funcs} are fulfilled. Then, $J$ and $G$ are continuously differentiable at any $z \in Z$. Moreover, for any $z=(u,v), \tilde{z} = (\tilde{u}, \tilde{v}) \in Z$, there hold
	\begin{align}
		J'(z)\tilde{z} & = \int_\Omega (\varphi_{u,v} +\Delta_1(u) ) \tilde{u} dx + \int_\Gamma ( \gamma\varphi_{u,v} +\Delta_2(v) ) \tilde{v}d\sigma(x) \label{eq:der-j} \\
		\intertext{and}
		G'(z)\tilde{z} &= \left( G_1'(z) \tilde{z}, G_2'(z) \tilde{z} \right) \label{eq:der-G}.
	\end{align}
	with
	\[
		\begin{aligned}
			& \Delta_1 (t) := \lambda_1 t + \lambda_2 |t|^{p-2}t, \quad \Delta_2 (t) := \mu_1 t + \mu_2 |t|^{q-2}t,\\
			& G_1'(z) \tilde{z} = \tilde{u} + H_1'(-g_1(\cdot, y_{u,v})) g'_{1y}(\cdot, y_{u,v})  S'(z)\tilde{z},\\
			& G_2'(z) \tilde{z} =\tilde{v} + H_2'(-g_2(\cdot, y_{u,v})) g'_{2y}(\cdot, y_{u,v}) \gamma (S'(z)\tilde{z}).
		\end{aligned}
	\]
	Here $y_{u,v} := S(z)$ and $\varphi_{u,v}$  is the unique solution in $W^{1,k}(\Omega)$ for all $k \geq 1$ to
	\begin{equation}
		\label{eq:adjoint-oper}
		\left\{
		\begin{aligned}
			A^*\varphi_{u,v} + f'_y(x, y_{u,v}) \varphi_{u,v} &= L'_{y}(\cdot, y_{u,v}) \quad \text{in } \Omega,\\
			\partial_{\nu_{A^*}} \varphi_{u,v}& = l'_{y}(\cdot, y_{u,v}) \quad \text{on } \Gamma,
		\end{aligned}
		\right.
	\end{equation}
	where $A^*$ is the adjoint operator of $A$ and given by 
	\[
		A^*\varphi=-\sum_{i,j=1}^ND_i\left(a_{ij} D_j\varphi \right)+a_0 \varphi
	\]
	and
	\[ 
		\partial_{\nu_{A^*}} \varphi = \sum_{i,j=1}^N a_{ij} D_j\varphi \nu_i.
	\] 
\end{lemma}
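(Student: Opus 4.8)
The plan is to reduce both differentiability claims to the continuous differentiability of $S$ from \cref{lem:control2state-diff} together with the differentiability of the superposition (Nemytskii) and integral functionals generated by $L,\ell,g_i,H_i$ and by the power nonlinearities in the cost. For $J$ I would first split the cost as $J(z)=J_1(S(z))+J_2(z)$, where $J_1(y):=\int_\Omega L(x,y)\,dx+\int_\Gamma \ell(x',y)\,d\sigma(x)$ gathers the state-dependent terms (the boundary term understood via the trace) and $J_2(u,v):=\int_\Omega(\tfrac{\lambda_1}{2}|u|^2+\tfrac{\lambda_2}{p}|u|^p)\,dx+\int_\Gamma(\tfrac{\mu_1}{2}|v|^2+\tfrac{\mu_2}{q}|v|^q)\,d\sigma(x)$ gathers the control-dependent ones. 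By the Lipschitz and $L^\infty$ hypotheses on $L'_y,\ell'_y$ in \cref{ass:L-func}, the functionals $y\mapsto\int_\Omega L(\cdot,y)\,dx$ and $y\mapsto\int_\Gamma\ell(\cdot,y)\,d\sigma$ are continuously differentiable on $C(\overline\Omega)$ with derivatives $h\mapsto\int_\Omega L'_y(\cdot,y)h\,dx$ and $h\mapsto\int_\Gamma\ell'_y(\cdot,y)\gamma h\,d\sigma$; composing with $S$ via the chain rule gives, with $w:=S'(z)\tilde z$, the state contribution $\int_\Omega L'_y(\cdot,y)w\,dx+\int_\Gamma\ell'_y(\cdot,y)\gamma w\,d\sigma(x)$. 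For $J_2$ I would invoke the standard differentiability of integral functionals on $L^p$ and $L^q$ (see, e.g., \cite[Chap.~4]{Troltzsch2010}), using that $t\mapsto|t|^{p-2}t$ maps $L^p(\Omega)$ continuously into $L^{p'}(\Omega)$ and likewise for $q$, to obtain $J_2'(z)\tilde z=\int_\Omega\Delta_1(u)\tilde u\,dx+\int_\Gamma\Delta_2(v)\tilde v\,d\sigma(x)$.

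To recast the state contribution in the adjoint form \eqref{eq:der-j}, I would introduce $\varphi_{u,v}$ as the solution of \eqref{eq:adjoint-oper}. Because $y=S(z)\in C(\overline\Omega)$ is bounded and $L'_y(\cdot,0),\ell'_y(\cdot,0)\in L^\infty$ with $L'_y,\ell'_y$ locally Lipschitz, the right-hand sides $L'_y(\cdot,y)$ and $\ell'_y(\cdot,y)$ lie in $L^\infty(\Omega)$ and $L^\infty(\Gamma)$; since these embed into $(W^{1,k'}(\Omega))^*$ and $W^{-1/k,k}(\Gamma)$ for every $k$, \cite[Lem.~2.4]{CasasMateos2002} furnishes a unique $\varphi_{u,v}\in W^{1,k}(\Omega)$ for all $k\geq1$. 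The identity I need is then obtained by testing the weak form of the linearized equation \eqref{eq:diff-S} with $\varphi_{u,v}$ and the weak form of \eqref{eq:adjoint-oper} with $w$ (both admissible, as $w,\varphi_{u,v}\in H^1(\Omega)$): the symmetry $a_{ij}=a_{ji}$ makes the two bilinear forms coincide and cancel, leaving $\int_\Omega L'_y(\cdot,y)w\,dx+\int_\Gamma\ell'_y(\cdot,y)\gamma w\,d\sigma(x)=\int_\Omega\varphi_{u,v}\tilde u\,dx+\int_\Gamma\gamma\varphi_{u,v}\tilde v\,d\sigma(x)$, which combined with $J_2'$ yields \eqref{eq:der-j}.

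For $G$ I would differentiate each component by the chain rule applied to the composition of $S$ with the superposition operators generated by $g_i$ and $H_i$. Writing $y=S(z)$, the map $y\mapsto H_1(-g_1(\cdot,y))$ factors through $C(\overline\Omega)$: since $g_1$ is continuous with $g_1(x,\cdot)\in C^1$ and $H_1$ is $C^1$ (being the inverse of the strictly monotone $C^1$ function $\zeta_1$ with $|\zeta_1'|\geq\rho_1>0$ from \cref{ass:zeta-funcs}), both $y\mapsto g_1(\cdot,y)$ and $w\mapsto H_1(w)$ are continuously differentiable as Nemytskii operators on $C(\overline\Omega)$, and the embedding $C(\overline\Omega)\hookrightarrow L^p(\Omega)$ lands the result in the first component $L^p(\Omega)$ of $E$. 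Tracking the minus signs, the chain rule gives $G_1'(z)\tilde z=\tilde u+H_1'(-g_1(\cdot,y))g'_{1y}(\cdot,y)S'(z)\tilde z$; the same computation, with the trace $\gamma$ inserted because $g_2$ lives on $\Gamma$, yields the stated $G_2'(z)\tilde z$, establishing \eqref{eq:der-G}. Continuity of $J'$ and $G'$ would then follow from the continuity of $z\mapsto S'(z)$, of the Nemytskii derivatives $y\mapsto L'_y(\cdot,y),\ell'_y(\cdot,y),g'_{iy}(\cdot,y),H_i'(\cdot)$ granted by the local Lipschitz hypotheses, of $\Delta_1,\Delta_2$, and of $z\mapsto\varphi_{u,v}$ through the well-posedness of \eqref{eq:adjoint-oper}.

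I expect the main obstacle to be the continuous differentiability of the composite superposition operators in the right spaces --- in particular verifying that $y\mapsto H_i(-g_i(\cdot,y))$ is $C^1$ from $C(\overline\Omega)$ into $L^p(\Omega)$ (resp. $L^q(\Gamma)$ after taking the trace), where the $C^1$-regularity of the inverse $H_i$, hence the uniform lower bound $|\zeta_i'|\geq\rho_i$, is essential --- together with the differentiability of the power-type control functionals on $L^p$ and $L^q$. By contrast, the adjoint manipulation is a clean consequence of the symmetry $a_{ij}=a_{ji}$ once $\varphi_{u,v}$ is known to be well-defined.
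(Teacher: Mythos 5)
Your proposal is correct and takes essentially the same route as the paper: the same splitting of the cost into state-dependent and control-dependent parts, the same construction of $\varphi_{u,v}$ from the $L^\infty$ right-hand sides via \cite[Lem.~2.4]{CasasMateos2002}, and the same chain-rule treatment of $G$ through the Nemytskii operators of $g_i$ and $H_i$ acting on $C(\overline\Omega)$, relying on the lower bound $|\zeta_i'|\geq\rho_i$. The only differences are presentational: you spell out the symmetric-testing argument that puts the state contribution into adjoint form (the paper leaves this implicit), while the paper instead records quantitative Lipschitz estimates for $H_i$ and $H_i'$ (its inequality \eqref{eq:H-der-Lipschitz}) in place of your appeal to mere continuity of $H_i'$, since those estimates are reused later in the regularity analysis.
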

\begin{proof}
	We first note that the right-hand sides of \eqref{eq:adjoint-oper} belong to $L^\infty(\Omega)$ and $L^\infty(\Gamma)$ because of $y_{u,v} \in C(\overline\Omega)$ and \cref{ass:L-func}. From this and the Sobolev embedding \cite{Adams,Kufner1977}, we have
	\[
		L'_{y}(\cdot, y_{u,v}) \in  \left(W^{1,k'}(\Omega) \right)^* \quad \text{and} \quad l'_{y}(\cdot, y_{u,v}) \in W^{-\frac{1}{k},k}(\Gamma)
	\]
	for all $k \geq 1$. Applying \cite[Lem.~2.4]{CasasMateos2002}, we then deduce that \eqref{eq:adjoint-oper} admits unique solution $\varphi_{u,v}$ in $W^{1,k}(\Omega)$ for all $k\geq 1$.
	Since the mapping $I_\Omega: L^p(\Omega) \ni u \mapsto \frac{\lambda_1}{2}\norm{u}^2_{L^2(\Omega)} + \frac{\lambda_2}{p} \norm{u}_{L^p(\Omega)}^p \in \R$ with $p\geq 2$ is of class $C^1$. Moreover, by virtue of \cite[Prop.~4.9]{Cioranescu1990}, we have 
	\[
		I_\Omega'(u) = \lambda_1 u + \lambda_2 |u|^{p-2}u.
	\]
	Similarly, the mapping $I_\Gamma: L^q(\Gamma) \ni v \mapsto \frac{\mu_1}{2}\norm{v}^2_{L^2(\Gamma)} + \frac{\mu_2}{p} \norm{v}_{L^q(\Gamma)}^q \in \R$ with $q \geq 2$ is of class $C^1$ and satisfies
	\[
		I_\Gamma'(v) = \mu_1 v + \mu_2 |v|^{q-2}v.
	\]
	Besides, the mapping $I_0 : C(\overline\Omega) \ni y \mapsto \int_\Omega L(x,y(x))dx + \int_\Gamma l(x,y(x))d\sigma(x) \in \R$ is continuously differentiable due to \cref{ass:L-func}. It is noted for any $(y,u,v) \in C(\overline\Omega) \times L^p(\Omega) \times L^q(\Gamma)$ that
	\[
		I(y,u,v) = I_0(y) + I_\Omega(u) + I_\Gamma(v).
	\]
	Therefore, functional $I$ is also continuously differentiable as a function on $C(\overline\Omega) \times L^p(\Omega) \times L^q(\Gamma)$. Applying the chain rule to $J(z) = I(S(z),z)$ with $z = (u,v)$ and using \cref{lem:control2state-diff}, we get that $J$ is of class $C^1$ and we derive \eqref{eq:der-j}.
	
	To prove \eqref{eq:der-G}, we first show that $H_i$ are globally Lipschitz continuous and that, for any $M>0$, there exist $K_{H_i,M}>0$ satisfying 
	\begin{equation} \label{eq:H-der-Lipschitz}
		\left|H_{i}'(\tau_1) - H_i'(\tau_2) \right| \leq K_{H_i,M}|\tau_1-\tau_2|
	\end{equation}
	for all $\tau_1, \tau_2 \in \R$ with $|\tau_1|, |\tau_2| \leq M$ for $i=1,2$.
	To this end, since $|\zeta_i'\left(\cdot\right)| \geq \rho_i>0$, its inverse $H_i$ satisfies
	\begin{equation}
		\label{eq:estiH}
		|H_i' \left(q\right)|=\frac{1}{|\zeta_i'\left(H_1\left(q\right)\right)|} \leq \frac{1}{\rho_i},
	\end{equation}
	which implies that $H_i$ is globally Lipschitz continuous. This and the fact $H_i(0)=0$ imply that $|H_i\left(\tau\right)| \leq C_M$ whenever $|\tau|\leq M$. 
	Moreover, we deduce from \eqref{eq:estiH} and \cref{ass:zeta-funcs} for $|\tau_1|, |\tau_2|\leq M$ that
	\begin{align*}
		|H_i'\left(\tau_1\right) -H_i'\left(\tau_2\right)| =& \left| \dfrac{1}{\zeta_i' \left(H_i\left(\tau_1\right)\right)} -\dfrac{1}{\zeta_i' \left(H_i\left(\tau_2\right)\right)}\right|\\
		\leq &\dfrac{1}{\rho_i^2} \left| \zeta_i' \left(H_i\left(\tau_1\right)\right) - \zeta_i' \left(H_i\left(\tau_2\right)\right)\right| \\
		\leq &\dfrac{1}{\rho_i^2} K_{\zeta_i,C_M} \left| H_i \left(\tau_1\right) - H_i\left(\tau_2\right)\right| \\
		\leq &\dfrac{1}{\rho_i^3} K_{\zeta_i,C_M} \left| \tau_1- \tau_2\right|. 
	\end{align*}
	We then derive \eqref{eq:H-der-Lipschitz}.

	Finally, from the chain rule, \cref{lem:control2state-diff}, the definition of $G$, the global Lipschitz continuity of $H_i$, $i=1,2$, \eqref{eq:H-der-Lipschitz}, and \cref{ass:g-funcs}, we derive the continuous differentiability of $G$ and formula \eqref{eq:der-G}.
\end{proof}

To derive the optimality system for problem \eqref{eq:P}, we need the following result.
\begin{proposition}
	\label{prop:Robinson}
	Problem \eqref{eq:P-MP} satisfies  Robinson's constraint qualification at any point $z \in Z$, that is,
	\begin{equation*}
		0 \in \textup{ int }\left\{G'(z)Z - (K - G(z))  \right\}.
	\end{equation*}
\end{proposition}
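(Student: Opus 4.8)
The plan is to prove the stronger statement that the linearized operator $G'(z)\colon Z \to E$ is surjective, i.e. $G'(z)Z = E$. This immediately yields Robinson's condition: fixing any $w_0 \in K - G(z)$, we get $G'(z)Z - (K - G(z)) \supseteq E - \{w_0\} = E$, so that $0$ lies in the interior of $G'(z)Z - (K-G(z))$.

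To establish surjectivity, fix an arbitrary $(\xi, \eta) \in E = L^p(\Omega) \times L^q(\Gamma)$ and look for $\tilde z = (\tilde u, \tilde v) \in Z$ with $G'(z)\tilde z = (\xi, \eta)$. Writing $y := S(z)$ and $w := S'(z)\tilde z$, the formulas in \cref{lem:diff-j-G} require
\begin{equation*}
    \left\{
    \begin{aligned}
        \tilde u + H_1'(-g_1(\cdot, y))\, g'_{1y}(\cdot, y)\, w &= \xi \quad \text{in } \Omega,\\
        \tilde v + H_2'(-g_2(\cdot, y))\, g'_{2y}(\cdot, y)\, \gamma w &= \eta \quad \text{on } \Gamma.
    \end{aligned}
    \right.
\end{equation*}
Substituting these expressions for $\tilde u$ and $\tilde v$ into the linearized state equation \eqref{eq:diff-S} and using the identity $H_i'(q) = 1/\zeta_i'(H_i(q))$ from \eqref{eq:estiH}, the whole problem collapses to the single linear elliptic boundary value problem
\begin{equation*}
    \left\{
    \begin{aligned}
        Aw + a(\cdot)\, w &= \xi \quad \text{in } \Omega,\\
        \partial_{\nu_A} w + b(\cdot)\, \gamma w &= \eta \quad \text{on } \Gamma,
    \end{aligned}
    \right.
\end{equation*}
with $a(x) := f'_y(x, y(x)) + g'_{1y}(x, y(x)) / \zeta_1'(H_1(-g_1(x, y(x))))$ and $b(x') := g'_{2y}(x', y(x')) / \zeta_2'(H_2(-g_2(x', y(x'))))$.

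The decisive point -- and the step I expect to carry the whole argument -- is that assumption \eqref{eq:Robinson-needed} is precisely the statement $a \geq 0$ a.e. in $\Omega$ and $b \geq 0$ a.e. on $\Gamma$. Both coefficients are moreover bounded, since $y \in C(\overline\Omega)$, the Nemytskii images $g_{iy}'(\cdot, y)$ are bounded, and $|\zeta_i'| \geq \rho_i > 0$. Combined with \cref{ass:A-operator} (uniform ellipticity together with $a_0 \geq 0$ and $a_0 > 0$ on a set of positive measure), the displayed problem thus has a coercive associated bilinear form and is uniquely solvable. As $\xi \in L^p(\Omega) \hookrightarrow (W^{1,r'}(\Omega))^*$ and $\eta \in L^q(\Gamma) \hookrightarrow W^{-\frac{1}{r}, r}(\Gamma)$, \cite[Lem.~2.4]{CasasMateos2002} provides a unique solution $w \in W^{1,r}(\Omega)$ with $r > N$.

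It then remains to read off the solution of the surjectivity equation. Define $\tilde u := \xi - H_1'(-g_1(\cdot, y))\, g'_{1y}(\cdot, y)\, w$ and $\tilde v := \eta - H_2'(-g_2(\cdot, y))\, g'_{2y}(\cdot, y)\, \gamma w$. Since $W^{1,r}(\Omega) \hookrightarrow C(\overline\Omega)$ for $r > N$ and the trace $\gamma w$ is continuous, the boundedness of the coefficients gives $\tilde u \in L^p(\Omega)$ and $\tilde v \in L^q(\Gamma)$, hence $\tilde z \in Z$. By construction $w$ solves \eqref{eq:diff-S} with right-hand side $(\tilde u, \tilde v)$, so $w = S'(z)\tilde z$ by the uniqueness in \cref{lem:control2state-diff}, and substituting back into the formulas of \cref{lem:diff-j-G} yields $G'(z)\tilde z = (\xi, \eta)$. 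This establishes surjectivity of $G'(z)$ and therefore Robinson's constraint qualification. Apart from the sign check coming from \eqref{eq:Robinson-needed}, every step is routine bookkeeping with the regularity results already proved.
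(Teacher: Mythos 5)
Your proposal is correct and follows essentially the same route as the paper: both reduce the problem to solving the auxiliary elliptic boundary value problem with zero-order coefficients $f'_y + g'_{1y}/\zeta_1'(H_1(-g_1))$ and $g'_{2y}/\zeta_2'(H_2(-g_2))$, use \eqref{eq:Robinson-needed} for their nonnegativity and hence unique solvability, and then recover $\tilde u, \tilde v$ exactly as in \eqref{eq:Robinson-3} to conclude surjectivity of $G'(z)$. The only cosmetic difference is that the paper passes through the Zowe--Kurcyusz cone condition $E = G'(z)Z - \cone(K - G(z))$, while you deduce Robinson's condition directly from surjectivity; both deductions are immediate.
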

\begin{proof}
	According to \cite[Thm.~2.1]{ZoweKurcyusz1979} (also \cite[Lem.~2.3]{MaurerZowe1979}), it suffices to prove that
	\begin{equation}
		\label{eq:Robinson}
		E = G'(z)Z - \cone(K - G(z)),
	\end{equation}
	where
	\[
	\cone(K- \bar k):= \left\{ \lambda(k - \bar k) \mid \lambda \geq 0, k \in K \right\} \quad (\bar k \in K).
	\]
	To this end, let $z_0 =(u_0,v_0) \in E$ be arbitrary and consider the following equation
	\begin{equation}
		\label{eq:Robinson-2}
		\left\{
		\begin{aligned}
			Aw + f'_y(x, y_{u,v})w  + \frac{g'_{1y}(\cdot, y_{u,v})}{\xi_1'(H_1(-g_1(\cdot,y_{u,v})))}w &= u_0 \quad \text{in } \Omega,\\
			\partial_{\nu _A} w  + \frac{g'_{2y}(\cdot, y_{u,v})}{\xi_2'(H_2(-g_2(\cdot,y_{u,v})))}w & = v_0 \quad \text{on } \Gamma
		\end{aligned}
		\right.
	\end{equation}
	with $y_{u,v} = S(z)$ and $(u,v)=z$. By \eqref{eq:Robinson-needed}, equation \eqref{eq:Robinson-2} admits a unique solution $w \in H^1(\Omega) \cap C(\overline\Omega)$. Setting
	\begin{equation} \label{eq:Robinson-3}
		\left\{
		\begin{aligned}
			\tilde{u} &:= u_0 - \frac{g'_{1y}(\cdot, y_{u,v})}{\xi_1'(H_1(-g_1(\cdot,y_{u,v})))}w \quad \text{in } \Omega,\\
			\tilde{v} &:= v_0 - \frac{g'_{2y}(\cdot, y_{u,v})}{\xi_2'(H_2(-g_2(\cdot,y_{u,v})))}w \quad \text{on } \Gamma
		\end{aligned}
		\right.
	\end{equation}
	yields that $(\tilde{u}, \tilde{v}) \in Z$ and $w$ solves the equation
	\begin{equation*}
		\left\{
		\begin{aligned}
			Aw + f'_y(x, y_{u,v})w &= \tilde{u} \quad \text{in } \Omega,\\
			\partial_{\nu _A} w & = \tilde{v} \quad \text{on } \Gamma.
		\end{aligned}
		\right.
	\end{equation*}
	The last system gives that $w = S'(z)\tilde{z}$ with $\tilde{z} := (\tilde{u}, \tilde{v})$. From \eqref{eq:Robinson-3} and the definition of functions $H_i$ ($i=1,2$), we have
	\[
		\left\{
		\begin{aligned}
			u_0 &= \tilde{u} + H_1'(-g_1(\cdot,y_{u,v}))g'_{1y}(\cdot, y_{u,v})S'(z)\tilde{z},\\
			v_0 &= \tilde{v} + H_2'(-g_2(\cdot,y_{u,v}))g'_{2y}(\cdot, y_{u,v})\gamma(S'(z)\tilde{z}).
		\end{aligned}
		\right.
	\]
	Combining this with \eqref{eq:der-G} yields
	\[
		z_0 = G'(z)\tilde{z},
	\]
	which finally implies \eqref{eq:Robinson}.
\end{proof}

\subsection{Optimality conditions for \eqref{eq:P}} \label{sec:OC}

Let us set
\begin{equation} \label{eq:exponent-s}
	s := 
	\begin{cases}
		 \min\left\{ \left( 1- \frac{1}{p} - \frac{1}{N} \right)^{-1}, \frac{Nq}{(N-1)(q-1)} \right \} &\quad \text{if } 1- \frac{1}{p} - \frac{1}{N}  >0,\\
		 \frac{Nq}{(N-1)(q-1)}& \quad \text{otherwise}.
	\end{cases}
\end{equation}
\begin{proposition} \label{prop:conjugate}
	Let $r$ and $s$ be numbers defined in \eqref{eq:exponent-r} and \eqref{eq:exponent-s}, respectively. Then, there holds
	\begin{equation} \label{eq:conjugate}
		\frac{1}{r} + \frac{1}{s} < 1.
	\end{equation}
\end{proposition}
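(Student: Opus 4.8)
The plan is to reduce \eqref{eq:conjugate} to an elementary comparison between the four reciprocals that generate $r$ and $s$. Since both exponents are defined as minima, their reciprocals are maxima: setting
\[
a_1 := \frac{N-p}{pN}, \quad a_2 := \frac{N-1}{Nq}, \quad b_1 := 1 - \frac{1}{p} - \frac{1}{N}, \quad b_2 := \frac{(N-1)(q-1)}{Nq},
\]
we have $\frac1r = \max\{a_1,a_2\}$ when $N/2 < p < N$ and $\frac1r = a_2$ when $p \geq N$, while $\frac1s = \max\{b_1,b_2\}$ when $b_1 > 0$ and $\frac1s = b_2$ otherwise. Because $\max_i a_i + \max_j b_j = \max_{i,j}(a_i + b_j)$, in each branch the number $\frac1r + \frac1s$ is one of the sums $a_i + b_j$, and in every branch it is at most the largest of the four. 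Hence it suffices to show that all four sums are strictly below $1$.

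First I would evaluate the two matched sums, which collapse to clean expressions:
\[
a_1 + b_1 = \frac{N-2}{N}, \qquad a_2 + b_2 = \frac{N-1}{N}.
\]
Both are $<1$ for every $N \geq 2$, and the value $a_2 + b_2 = (N-1)/N$ is the sharpest of the four; this is precisely why the statement must be a strict inequality rather than an equality.

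It then remains to bound the two cross sums. Clearing denominators over $pNq$ turns $a_1 + b_2 < 1$ into the equivalent inequality $q(N-2p) < p(N-1)$, which holds because the hypothesis $p > N/2$ makes $N-2p<0$, so the left side is negative while the right side is positive. The same manipulation turns $a_2 + b_1 < 1$ into $p(N-1) < q(N+p)$; here I would invoke $q > N-1$ and the identity $(N-1)(N+p) = p(N-1) + N(N-1)$ to get $q(N+p) > (N-1)(N+p) > p(N-1)$. Combining the four estimates and reading off the relevant maximum in each branch of \eqref{eq:exponent-r} and \eqref{eq:exponent-s} gives $\frac1r + \frac1s < 1$ throughout.

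The argument is computationally routine; the only delicate point is the bookkeeping of branches, namely verifying that every pair $(a_i,b_j)$ that the defining minima can actually select is accounted for, and that the strict constraints $p > N/2$ and $q > N-1$ (not their non-strict versions) are used exactly where the strictness of \eqref{eq:conjugate} is produced.
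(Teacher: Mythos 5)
Your proposal is correct and follows essentially the same route as the paper: both write $1/r$ and $1/s$ as maxima of the same four quantities and verify that each of the four pairwise sums is strictly below $1$, using $p>N/2$ for one cross sum and $q>N-1$ for the other. The only cosmetic difference is that you clear denominators to get integer inequalities such as $q(N-2p)<p(N-1)$, whereas the paper rearranges the reciprocals directly; the substance is identical.
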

\begin{proof}
	By definition, we have
	\begin{equation*} 
		\frac{1}{r} = \begin{cases}
			\max\left\{ \frac{1}{p} - \frac{1}{N}, \frac{1}{q} - \frac{1}{Nq}  \right \} &\quad \text{if } \frac{N}{2} < p < N,\\
			 \frac{1}{q} - \frac{1}{Nq} & \quad \text{if } p \geq N
		\end{cases}
	\end{equation*}
	and
	\[
		 \frac{1}{s} = \begin{cases}
		 	\max\left\{  1- \frac{1}{p} - \frac{1}{N} , (1- \frac{1}{N})(1- \frac{1}{q}) \right \} &\quad \text{if } 1- \frac{1}{p} - \frac{1}{N}  >0,\\
		 	(1- \frac{1}{N})(1- \frac{1}{q})& \quad \text{otherwise}.
		 \end{cases}
	\]
	By using elementary calculations, we get
	\begin{align*}
		& \left(\frac{1}{p} - \frac{1}{N} \right) + \left(  1- \frac{1}{p} - \frac{1}{N}\right) = 1 - \frac{2}{N} < 1,\\
		&  \left(\frac{1}{p} - \frac{1}{N} \right) +  \left(1- \frac{1}{N}\right)\left(1- \frac{1}{q}\right) = 1 + \left(\frac{1}{p} - \frac{2}{N} \right) - \frac{1}{q}\left( 1 - \frac{1}{N} \right) < 1 \, \text{since } p > N/2,\\
		&\left( \frac{1}{q} - \frac{1}{Nq} \right) + \left(  1- \frac{1}{p} - \frac{1}{N}\right)= 1 - \frac{1}{p} - \frac{1}{N}\left( 1 - \frac{N-1}{q} \right) < 1 \, \text{since } q > N-1,
		\intertext{and}
		& \left( \frac{1}{q} - \frac{1}{Nq} \right) +\left(1- \frac{1}{N}\right)\left(1- \frac{1}{q}\right) = 1 - \frac{1}{N} < 1.
	\end{align*}
	These therefore give \eqref{eq:conjugate}.
\end{proof}

We now have everything in hand to derive the optimality conditions at any local minimizer of problem \eqref{eq:P}.
\begin{theorem}\label{thm:OC}
	Suppose that \crefrange{ass:domain}{ass:zeta-funcs} are fulfilled and $\left(\bar y, \bar u,\bar v\right)$ is a local minimizer of problem \eqref{eq:P}.  Then there exist multipliers $\bar \phi \in W^{1,s}(\Omega)$ with $s$ defined in \eqref{eq:exponent-s}, $\psi_1\in L^{p'}\left(\Omega\right)$ and $\psi_2\in L^{q'}\left(\Gamma\right)$ such that the following conditions are fulfilled:
	\begin{enumerate}[label=(\roman*)]
		\item (the adjoint equation)
		\begin{equation}\label{eq:adjoint-state}
		\begin{cases}
			A^*\bar\phi+f_y'(\cdot, \bar y)\bar\phi= L_y'(\cdot, \bar y)+ g_{1y}'(\cdot, \bar y)\psi_1 \quad &\text{\rm in}\quad \Omega,\\
			\partial_{\nu_{A^*}}\bar\phi = \ell_y'(\cdot, \bar y)+ g_{2y}'(\cdot, \bar y)\psi_2  \quad &\text{\rm on}\quad \Gamma;
		\end{cases}
		\end{equation}
		\item (the stationary conditions in $u$ and $v$)
		\begin{align}
			&\lambda_1\bar u + \lambda_2|\bar u|^{p-2}\bar u + \bar \phi + \zeta_1'\left(\bar u\right)\psi_1=0, \quad \text{\rm in}\ \Omega, \label{eq:stationary-u}\\
			&\mu_1\bar v +\mu_2|\bar v|^{q-2} \bar v+ \gamma \bar \phi +\zeta_2'\left(\bar v\right)\psi_2=0, \quad \text{\rm on}\ \Gamma; \label{eq:stationary-v}
		\end{align} 
		\item (the complementary conditions)
		\begin{align} 
			& \psi_1 \left(x\right)\left(\zeta_1\left(\bar u\left(x\right)\right)+g_1(x,\bar y(x)\right)=0, \quad\text{a.e.}\quad x\in \Omega, \label{eq:complementary-u}\\
			& \psi_2 \left(x'\right)\left(\zeta_2\left(\bar v\left(x'\right)\right)+g_2(x',\bar y(x')\right)=0, \quad\text{a.e.}\quad x'\in \Gamma.	 \label{eq:complementary-v}
		\end{align}
	\end{enumerate}
\end{theorem}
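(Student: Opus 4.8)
The plan is to read off the optimality system from the reformulated program \eqref{eq:P-MP} by means of the classical first-order theory for optimization problems in Banach spaces. Write $\bar z := (\bar u,\bar v)$ and $\bar y := S(\bar z)$. Since $J$ and $G$ are continuously differentiable at $\bar z$ by \cref{lem:diff-j-G} and Robinson's constraint qualification holds there by \cref{prop:Robinson}, the abstract multiplier rule (see, e.g., \cite{BonnansShapiro2000}) provides a Lagrange multiplier $\bar e = (e_1,e_2) \in E^* = L^{p'}(\Omega) \times L^{q'}(\Gamma)$ with $D_z\mathcal L(\bar z;\bar e) = 0$ and $\bar e$ in the normal cone $N_K(G(\bar z))$ to $K$ at $G(\bar z)$. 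Because $K$ is a product of the signed nonnegativity cones $\pm K_\Omega, \pm K_\Gamma$, the normal-cone inclusion splits into sign conditions on $e_1, e_2$ together with $\langle e_1, G_1(\bar z)\rangle = 0$ and $\langle e_2, G_2(\bar z)\rangle = 0$; as $e_i$ and $G_i(\bar z)$ carry an a.e.\ constant sign, these integral identities sharpen to the pointwise complementarity $e_1\bigl(\bar u - H_1(-g_1(\cdot,\bar y))\bigr) = 0$ a.e.\ in $\Omega$ and $e_2\bigl(\bar v - H_2(-g_2(\cdot,\bar y))\bigr) = 0$ a.e.\ on $\Gamma$.

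Next I would introduce the candidate multipliers $\psi_1 := e_1\,H_1'(-g_1(\cdot,\bar y))$, $\psi_2 := e_2\,H_2'(-g_2(\cdot,\bar y))$ and let $\bar\phi$ be the solution of the adjoint system \eqref{eq:adjoint-state}. Since $|H_i'| \le 1/\rho_i$ by \eqref{eq:estiH} and $g_{iy}'(\cdot,\bar y) \in L^\infty$ (by \cref{ass:g-funcs} and $\bar y \in C(\overline\Omega)$), one gets $\psi_1 \in L^{p'}(\Omega)$ and $\psi_2 \in L^{q'}(\Gamma)$, so the data $L_y'(\cdot,\bar y) + g_{1y}'(\cdot,\bar y)\psi_1$ and $\ell_y'(\cdot,\bar y) + g_{2y}'(\cdot,\bar y)\psi_2$ belong to $L^{p'}(\Omega)$ and $L^{q'}(\Gamma)$, respectively. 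The Sobolev embeddings that fix the exponent $s$ in \eqref{eq:exponent-s}, combined with \cite[Lem.~2.4]{CasasMateos2002} as in \cref{lem:control2state-diff}, then guarantee that $\bar\phi$ exists and lies in $W^{1,s}(\Omega)$, with $\psi_1, \psi_2$ having the asserted integrability.

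The core step is to decode $D_z\mathcal L(\bar z;\bar e)\tilde z = 0$. Writing $w := S'(\bar z)\tilde z$ and inserting \eqref{eq:der-j}--\eqref{eq:der-G}, the only terms carrying $w$ are $\int_\Omega g_{1y}'(\cdot,\bar y)\psi_1 w\,dx$ and $\int_\Gamma g_{2y}'(\cdot,\bar y)\psi_2\gamma w\,d\sigma(x)$, while the cost part is carried by $\varphi_{\bar u,\bar v}$ from \eqref{eq:der-j}. To remove the $w$-dependence, note that $\bar\phi - \varphi_{\bar u,\bar v}$ solves the adjoint system \eqref{eq:adjoint-oper} with right-hand sides $g_{1y}'(\cdot,\bar y)\psi_1$ and $g_{2y}'(\cdot,\bar y)\psi_2$; testing the weak form of the linearized equation \eqref{eq:diff-S} against $\bar\phi - \varphi_{\bar u,\bar v}$ and that of the adjoint system against $w$, and using the symmetry $a_{ij} = a_{ji}$ of \cref{ass:A-operator}, produces the duality identity
\[
\int_\Omega (\bar\phi - \varphi_{\bar u,\bar v})\,\tilde u\,dx + \int_\Gamma \gamma(\bar\phi - \varphi_{\bar u,\bar v})\,\tilde v\,d\sigma(x) = \int_\Omega g_{1y}'(\cdot,\bar y)\,\psi_1\,w\,dx + \int_\Gamma g_{2y}'(\cdot,\bar y)\,\psi_2\,\gamma w\,d\sigma(x).
\]
Substituting this back cancels every $w$-term and reduces the stationarity to
\[
\int_\Omega \bigl(\bar\phi + \Delta_1(\bar u) + e_1\bigr)\tilde u\,dx + \int_\Gamma \bigl(\gamma\bar\phi + \Delta_2(\bar v) + e_2\bigr)\tilde v\,d\sigma(x) = 0
\]
for all $(\tilde u,\tilde v) \in L^p(\Omega) \times L^q(\Gamma)$, whence $\bar\phi + \Delta_1(\bar u) + e_1 = 0$ in $\Omega$ and $\gamma\bar\phi + \Delta_2(\bar v) + e_2 = 0$ on $\Gamma$ (the adjoint equation \eqref{eq:adjoint-state} itself holding by the definition of $\bar\phi$).

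To match \eqref{eq:stationary-u}--\eqref{eq:stationary-v} I would replace $e_i$ by $\zeta_i'\psi_i$ using the complementarity of the first step. On the inactive set both $e_i$ and $\psi_i$ vanish, whereas on the active set $\bar u = H_1(-g_1(\cdot,\bar y))$ yields $\zeta_1'(\bar u)H_1'(-g_1(\cdot,\bar y)) = 1$, so that $e_1 = \zeta_1'(\bar u)\psi_1$ a.e.\ in $\Omega$ (and likewise $e_2 = \zeta_2'(\bar v)\psi_2$ a.e.\ on $\Gamma$); this converts the reduced stationarity into \eqref{eq:stationary-u}--\eqref{eq:stationary-v}, and the same active/inactive dichotomy turns the complementarity into \eqref{eq:complementary-u}--\eqref{eq:complementary-v}. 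I expect the genuine obstacle to be the rigorous justification of the duality identity above: since $\bar\phi \in W^{1,s}(\Omega)$ and $w \in W^{1,r}(\Omega)$ need not belong to $H^1(\Omega)$, the functions cannot be used as test functions outright, and one must verify that the gradient products $D_iw\,D_j\bar\phi$ and the boundary pairings are integrable before passing to a limit in a suitable approximation. This is precisely where \cref{prop:conjugate} is indispensable: the strict inequality $\tfrac1r + \tfrac1s < 1$ makes H\"older's inequality applicable, so all pairings are finite and the duality identity is legitimate.
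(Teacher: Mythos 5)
Your proposal is correct and follows essentially the same route as the paper's own proof: the Bonnans--Shapiro multiplier rule under Robinson's constraint qualification, pointwise complementarity extracted from the normal-cone condition, the splitting of the adjoint state (your $\bar\phi - \varphi_{\bar u,\bar v}$ is exactly the paper's auxiliary function $\chi$ solving \eqref{eq:adjoint-oper-chi}), and the duality identity between the linearized state equation and the $\chi$-equation, legitimized precisely by \cref{prop:conjugate}. Your multipliers $\psi_i = e_i H_i'(-g_i(\cdot,\bar y))$ coincide a.e.\ with the paper's $\psi_1 = e_1/\zeta_1'(\bar u)$, $\psi_2 = e_2/\zeta_2'(\bar v)$ by the complementarity relation (cf.\ \eqref{eq:ei-psi}), so the two arguments agree up to notation.
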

\begin{proof}
	Since $\left(\bar y, \bar u,\bar v\right)$ is a local minimizer of problem \eqref{eq:P}, $\bar z:= (\bar u, \bar v)$ is a local optimal solution to \eqref{eq:P-MP}.
	Thanks to \cref{prop:Robinson} and \cite[Thm.~3.9]{BonnansShapiro2000}, there exists a multiplier $e =(e_1, e_2) \in E^*$ such that
	\begin{align}
		& \mathcal{L}'_z(\bar z;e) = 0 \label{eq:Lagrangian-stationary}\\
		\intertext{and}
		& e \in N(K; G(\bar z)), \label{eq:normal-condition}
	\end{align}
	where the Lagrangian functional $\mathcal{L}$ is defined in \eqref{eq:Lagrangian-func} and
	\[
		N(K; G(\bar z)) := \left\{ e \in E^* \mid \langle e, k- G(\bar z) \rangle_{E^*,E} \leq 0 \, \text{for all } k \in K  \right\}.
	\]
	From \eqref{eq:normal-condition}, we have for $i=1,2$ that
	\begin{equation} \label{eq:normal-condition-i}
		e_i \in N(K_i; G_i(\bar z)). 
	\end{equation}
	Since $K_i$, $i=1,2$, are cones with vertex at the origin, \eqref{eq:normal-condition-i} is equivalent to
	\begin{equation}
		\left\{
			\begin{aligned}
				 & \langle e_i, G_i(\bar z) \rangle_{E_i^*, E_i} = 0, \\
				 & \langle e_i, k_i \rangle_{E_i^*, E_i} \leq 0 \quad \text{for all } k_i \in K_i, \\
				 & G_i(\bar z) \in K_i
			\end{aligned}
		\right.
	\end{equation}
	with $E_1:= L^p(\Omega)$ and $E_2:=L^{q}(\Gamma)$. From this and the definition of $K_i$, we have that
	\begin{equation*}
		\left\{
		\begin{aligned}
			& e_1(x)G_1(x) = 0 \quad \text{for a.e. } x \in \Omega,\\
			& e_2(x')G_2(x') = 0 \quad \text{for a.e. } x' \in \Gamma,
		\end{aligned}
		\right.
	\end{equation*}
	which infers that
	\begin{equation*}
		\left\{
		\begin{aligned}
			& e_1(x)(\bar u(x) - H_1(-g_1(x,\bar y(x))) = 0 \quad \text{for a.e. } x \in \Omega,\\
			& e_2(x')(\bar v(x') - H_2(-g_2(x',\bar y(x'))) = 0 \quad \text{for a.e. } x' \in \Gamma.
		\end{aligned}
		\right.
	\end{equation*}
	The definition of $H_i$ then implies that
	\begin{equation} \label{eq:complement-uv}
		\left\{
		\begin{aligned}
			& e_1(x)\left(\zeta_1\left(\bar u\left(x\right)\right)+g_1(x,\bar y(x))\right)=0 \quad \text{for a.e. } x \in \Omega,\\
			& e_2(x')\left(\zeta_2\left(\bar v\left(x'\right)\right)+g_2(x',\bar y(x'))\right)=0 \quad \text{for a.e. } x' \in \Gamma.
		\end{aligned}
		\right.
	\end{equation}
	On the other hand, by setting $\psi_1 := \frac{e_1}{\zeta_1'(\bar u)}$ and $\psi_2 := \frac{e_2}{\zeta_2'(\bar v)}$, we deduce from \eqref{eq:complement-uv} 
	that \eqref{eq:complementary-u} and \eqref{eq:complementary-v} are verified. Also, \eqref{eq:complement-uv} and the definition of $H_i$, $i=1,2$ indicate that 
	\begin{equation}
		\label{eq:ei-psi}
		\begin{aligned}[b]
		H_i'(-g_i(\cdot, \bar y))e_i &=\frac{1}{\zeta_i'(H_i(-g_i(\cdot, \bar y)))}e_i \\
		& =\frac{1}{\zeta_i'(H_i(\zeta_i(\bar z_i)))}e_i \\
		&  = \psi_i,
		\end{aligned}
	\end{equation}
	where $\bar z_1 := \bar u$ and $\bar z_2 := \bar v$. 
	Besides, we have from the definition of $\mathcal{L}$ and \cref{lem:diff-j-G} for any $\tilde{z} = (\tilde{u}, \tilde{v})$ that
	\begin{align}
		\mathcal{L}'_z(\bar z;e)\tilde{z}& = \int_\Omega (\varphi_{\bar u,\bar v} +\Delta_1(\bar u) ) \tilde{u} dx + \int_\Gamma ( \gamma \varphi_{\bar u,\bar v} +\Delta_2(\bar v) ) \tilde{v}d\sigma(x) \notag \\
		& \qquad + \int_\Omega e_1 \left[ \tilde{u} + H_1'(-g_1(\cdot, \bar y)) g'_{1y}(\cdot, \bar y) w  \right]dx \notag \\
		& \qquad + \int_\Gamma e_2 \left[ \tilde{v} + H_2'(-g_2(\cdot, \bar y)) g'_{2y}(\cdot, \bar y) \gamma w \right]d\sigma(x) \label{eq:Lagrangian-der-1}
	\end{align}
	with $w := S'(\bar z)\tilde{z}$. Here $\varphi_{\bar u,\bar v}$ denotes the unique solution to \eqref{eq:adjoint-oper} corresponding to $u := \bar u, v := \bar v$. Now consider the following  equation
	\begin{equation}
		\label{eq:adjoint-oper-chi}
		\left\{
		\begin{aligned}
			A^*\chi + f'_y(x, \bar y) \chi &= H_1'(-g_1(\cdot, \bar y)) g'_{1y}(\cdot, \bar y)e_1 \quad \text{in } \Omega,\\
			\partial_{\nu_{A^*}} \chi& = H_2'(-g_2(\cdot, \bar y)) g'_{2y}(\cdot, \bar y)e_2 \quad \text{on } \Gamma.
		\end{aligned}
		\right.
	\end{equation}
	Since $e_1 \in L^{p'}(\Omega)$, the first equation in \eqref{eq:adjoint-oper-chi} has the right-hand side term belonging to $L^{p'}(\Omega)$. From this and the Sobolev embedding, we have 
	\begin{equation*}
		H_1'(-g_1(\cdot, \bar y)) g'_{1y}(\cdot, \bar y)e_1 \in \left(W^{1,s_0'}(\Omega) \right)^*
	\end{equation*}
	with
	\begin{equation*}
		s_0 \begin{cases}
			 =\left( 1- \frac{1}{p} - \frac{1}{N} \right)^{-1} &\quad \text{if } 1- \frac{1}{p} - \frac{1}{N}  >0,\\
			\geq 1& \quad \text{otherwise}.
		\end{cases}
	\end{equation*}
	Analogously, there holds
	\[
		H_2'(-g_2(\cdot, \bar y)) g'_{2y}(\cdot, \bar y)e_2 \in W^{-\frac{1}{k},k}(\Gamma)
	\]
	with $k := \frac{N}{N-1}\frac{q}{q-1}$. In view of \cite[Lem.~2.4]{CasasMateos2002}, equation \eqref{eq:adjoint-oper-chi} has a unique solution $\chi$ in $W^{1,s}(\Omega)$ for $s$ determined in \eqref{eq:exponent-s}.
	
	On the other hand, \cref{lem:control2state-diff} implies that $w \in W^{1,r}(\Omega)$ with $r$ defined in \eqref{eq:exponent-r}. Thanks to \eqref{eq:conjugate}, we have
	\begin{equation*}
		w \in W^{1,s'}(\Omega) \quad \text{and} \quad \chi \in W^{1,r'}(\Omega).
	\end{equation*}
	Testing \eqref{eq:diff-S} and \eqref{eq:adjoint-oper-chi} by $\chi$ and $w$, respectively, and then subtracting the obtained results, we get
	\begin{multline*}
		\int_\Omega e_1  H_1'(-g_1(\cdot, \bar y)) g'_{1y}(\cdot, \bar y) w  dx + \int_\Gamma e_2   H_2'(-g_2(\cdot, \bar y)) g'_{2y}(\cdot, \bar y) \gamma w d\sigma(x) \\
		= \int_\Omega \tilde{u}\chi dx + \int_\Gamma \tilde{v}\gamma \chi d\sigma(x),
	\end{multline*}
	which, along with \eqref{eq:Lagrangian-der-1}, yields
	\begin{equation} \label{eq:Lagrangian-der-2}
			\mathcal{L}'_z(\bar z;e)\tilde{z}= \int_\Omega (\varphi_{\bar u,\bar v} + \chi + e_1 +\Delta_1(\bar u) ) \tilde{u} dx + \int_\Gamma ( \gamma\varphi_{\bar u,\bar v} + \gamma\chi + e_2 +\Delta_2(\bar v) ) \tilde{v}d\sigma(x).
	\end{equation}
	Putting $\bar \phi := \varphi_{\bar u,\bar v} + \chi$, 
	we then conclude from \eqref{eq:adjoint-oper}, \eqref{eq:adjoint-oper-chi}, \eqref{eq:ei-psi} that $\bar \phi$ satisfies \eqref{eq:adjoint-state}.
	Besides, \eqref{eq:Lagrangian-der-2} implies that	
	\begin{equation} \label{eq:Lagrangian-der-3}
		\mathcal{L}'_z(\bar z;e)\tilde{z} = \int_\Omega (\bar\phi + \zeta_1'(\bar u)\psi_1 +\Delta_1(\bar u) ) \tilde{u} dx + \int_\Gamma ( \gamma\bar\phi + \zeta_2'(\bar v)\psi_2 +\Delta_2(\bar v) ) \tilde{v}d\sigma(x).
	\end{equation}
	Combining \eqref{eq:Lagrangian-stationary} with \eqref{eq:Lagrangian-der-3} gives \eqref{eq:stationary-u} and \eqref{eq:stationary-v}. 	
\end{proof}

\section{Lipschitz continuity of optimal solutions and corresponding multipliers} \label{sec:Lipschitz-reg}

In this section, we shall prove our main result on Lipschitz continuity property of the optimal solutions and the corresponding Lagrange multipliers.

\begin{theorem}\label{thm:Lipschitz}
	Suppose that \crefrange{ass:domain}{ass:zeta-funcs} are satisfied. Furthermore assume for any $M>0$ that there exist constants $C_{L,M}$ and $C_{\ell,M}$ satisfying
	\begin{align}
		&|L_y' \left(x_1,y_1\right)-L_y'\left(x_2,y_2\right)| \leq C_{L,M}(|x_1-x_2| +|y_1-y_2|), \label{ass:Lip-L}\\
		&|\ell_y' \left(x_1',y_1\right)-\ell_y'\left(x_2',y_2\right)| \leq C_{\ell, M}(|x_1'-x_2'| +|y_1-y_2|) \label{ass:Lip-l}
	\end{align}
	for all $x_1,x_2 \in \overline\Omega$, $x_1',x_2' \in \Gamma$, and all $y_1, y_2 \in \R$ with $|y_i| \leq M$, $i=1,2$.
	Then $\bar y, \bar u, \bar \phi, \psi_1$ and $\bar v, \psi_2$ determined in \cref{thm:OC} are Lipschitz continuous on $\overline\Omega$ and $\Gamma$, respectively. 
	\end{theorem}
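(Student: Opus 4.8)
The plan is to use the optimality system of \cref{thm:OC} to express the controls and multipliers as superpositions (Nemytskii-type compositions) of $\bar\phi$, $\bar y$ and their traces, and then to bootstrap the regularity of the coupled pair $(\bar y,\bar\phi)$ solving the state equation \eqref{eq:state} and the adjoint equation \eqref{eq:adjoint-state}. First I would read off pointwise formulas for the controls. Set $\Delta_1(t)=\lambda_1 t+\lambda_2|t|^{p-2}t$; since $\Delta_1'\ge\lambda_1>0$, the inverse $\Delta_1^{-1}$ is globally Lipschitz. Combining the stationary condition \eqref{eq:stationary-u}, the complementarity \eqref{eq:complementary-u}, the constraint \eqref{eq:constraint-domain} and the sign of $\psi_1$ forced by $e_1\in N(K_1;G_1(\bar z))$, one verifies that, for a.e.\ $x$ and with $\zeta_1$ increasing,
\[
	\bar u(x)=\min\{\Delta_1^{-1}(-\bar\phi(x)),\,H_1(-g_1(x,\bar y(x)))\},
\]
with the symmetric $\max$-formula when $\zeta_1$ is decreasing, and an analogous formula for $\bar v$ on $\Gamma$ built from $\gamma\bar\phi$, $\gamma\bar y$ via \eqref{eq:stationary-v}, \eqref{eq:complementary-v}. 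Because $\Delta_i^{-1}$ is globally Lipschitz, $H_i$ is globally Lipschitz by \eqref{eq:estiH}, $g_i$ is Lipschitz by \cref{ass:g-funcs}, and scalar $\min/\max$ is $1$-Lipschitz, these formulas present $\bar u,\bar v$ as Lipschitz superpositions of $(\bar\phi,\bar y)$ and the space variable. Solving \eqref{eq:stationary-u}, \eqref{eq:stationary-v} for the multipliers yields $\psi_1=-(\Delta_1(\bar u)+\bar\phi)/\zeta_1'(\bar u)$ and $\psi_2=-(\Delta_2(\bar v)+\gamma\bar\phi)/\zeta_2'(\bar v)$, again Lipschitz superpositions on bounded ranges since $|\zeta_i'|\ge\rho_i>0$ by \cref{ass:zeta-funcs}.

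With these formulas the regularity question for all six functions reduces to that of $(\bar y,\bar\phi)$, which I would treat by an exponent-increasing bootstrap. The delicate point is the boundary: the adjoint multiplier $\bar\phi$ enters the formulas for $\bar v,\psi_2$ only through its trace $\gamma\bar\phi$, and the exponent $s$ of \eqref{eq:exponent-s} need not exceed $N$, so a priori $\gamma\bar\phi$ lies only in a fractional space $W^{1-1/s,s}(\Gamma)$. Hence $\bar v$ and $\psi_2$ are naturally measured in fractional Sobolev spaces $W^{\tau,k}(\Gamma)$ rather than in Hölder spaces, and this is exactly where the fractional calculus enters. In each pass I would: (i) from the current regularity of $(\bar y,\bar\phi)$ deduce fractional Sobolev regularity of $\bar u,\bar v,\psi_1,\psi_2$ through the superposition formulas, using the chain rule \cref{lem:chain-rule}, \cref{rem:chain-rule} to control the compositions $g_{iy}'(\cdot,\bar y)$, $\ell_y'(\cdot,\bar y)$ and the product rule \cref{lem:production} to control the products $g_{1y}'(\cdot,\bar y)\psi_1$ and $g_{2y}'(\cdot,\bar y)\psi_2$; (ii) insert the resulting interior right-hand sides and conormal data into the state equation \eqref{eq:state} and adjoint equation \eqref{eq:adjoint-state} and invoke the elliptic estimate \cite[Lem.~2.4]{CasasMateos2002} to raise the Sobolev exponent of $(\bar y,\bar\phi)$. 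I would show the exponents strictly increase at each pass and cross the threshold $s>N$, at which point $\bar y,\bar\phi\in C^{0,\alpha}(\overline\Omega)$; the formulas then make $\bar u,\bar v,\psi_1,\psi_2$ Hölder and bounded.

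The final push from Hölder to Lipschitz I would carry out once $\bar y,\bar\phi\in C^{0,\alpha}(\overline\Omega)$ with $\alpha$ large enough, specifically $\alpha>(N-1)/N$, which the preceding bootstrap can be pushed to guarantee. Then the interior datum $L_y'(\cdot,\bar y)+g_{1y}'(\cdot,\bar y)\psi_1$ of \eqref{eq:adjoint-state} is bounded (using \eqref{ass:Lip-L}), the conormal datum $\ell_y'(\cdot,\bar y)+g_{2y}'(\cdot,\bar y)\psi_2$ lies in $C^{0,\alpha}(\Gamma)\hookrightarrow W^{1-1/d,d}(\Gamma)$ for every $d<(1-\alpha)^{-1}$ (using \eqref{ass:Lip-l} together with the chain and product rules), and likewise the state conormal datum $\bar v\in C^{0,\alpha}(\Gamma)\hookrightarrow W^{1-1/d,d}(\Gamma)$. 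Choosing $d\in(N,(1-\alpha)^{-1})$, the $W^{2,d}$-regularity theory up to the $C^{1,1}$-boundary gives $\bar y,\bar\phi\in W^{2,d}(\Omega)\hookrightarrow C^{1,1-N/d}(\overline\Omega)\subset C^{0,1}(\overline\Omega)$. In particular $\gamma\bar\phi,\gamma\bar y\in C^{0,1}(\Gamma)$, and the superposition formulas of the first step finally deliver $\bar u,\psi_1\in C^{0,1}(\overline\Omega)$ and $\bar v,\psi_2\in C^{0,1}(\Gamma)$.

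The main obstacle is the boundary bootstrap in fractional Sobolev spaces. Because $\bar\phi$ acts through its trace and $s$ may stay below $N$ in the early iterations, one cannot work in Hölder spaces from the outset; instead the products $g_{iy}'(\cdot,\bar y)\psi_i$ and the compositions $g_{iy}'(\cdot,\bar y),\ell_y'(\cdot,\bar y)$ must be propagated through $W^{\tau,k}(\Gamma)$ with exponents that genuinely increase at each step and eventually clear both the continuity threshold $s=N$ and the trace threshold needed for $W^{2,d}$ with $d>N$. Making the product rule \cref{lem:production} and chain rule \cref{lem:chain-rule} quantitatively compatible with this exponent bookkeeping—so that no integrability is lost in the nonlinear superpositions and the iteration terminates at Lipschitz regularity rather than stalling just below it—is the technical heart of the argument, while the interior estimates and the scalar projection formulas are comparatively routine.
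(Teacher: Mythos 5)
Your proposal is correct, and its skeleton coincides with the paper's: your min/max formulas are exactly the paper's projection formulas \eqref{eq:projection-u} and \eqref{eq:projection-v} (since $\proj_{(-\infty,0]}(t)=\min\{t,0\}$, the expression $\proj_{(-\infty,0]}\left[\bar\omega_1 - H_1\right]+H_1$ is precisely $\min\{\bar\omega_1,H_1\}$), your multiplier expressions are the paper's \eqref{eq:psi2-express}, and your first bootstrap stage is the paper's \cref{lem:Holder-regularity}, which yields $\bar y,\bar\phi\in W^{1,k}(\Omega)$ for every $k<\infty$ and hence $C^{0,\alpha}(\overline\Omega)$ for every $\alpha<1$, so your requirement $\alpha>(N-1)/N$ is indeed attainable. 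Where you genuinely depart from the paper is the decisive H\"{o}lder-to-Lipschitz step. The paper lifts the boundary data into $W^{1-1/k,k}(\Gamma)$ for \emph{all} $k<\infty$ by means of its fractional chain rule (\cref{lem:chain-rule}, \cref{rem:chain-rule}) and product rule (\cref{lem:production}), applied to $H_2(-g_2(\cdot,\bar y))$, to $\bar v$, and to $\psi_2=-H_2'(\zeta_2(\bar v))\left[\gamma\bar\phi+\Delta_2(\bar v)\right]$, and concludes $\bar y,\bar\phi\in W^{2,k}(\Omega)$ for all $k$. You instead observe that once everything is H\"{o}lder with exponent $\alpha>(N-1)/N$, the boundary data are products and compositions of bounded H\"{o}lder functions, hence lie in $C^{0,\alpha}(\Gamma)$, and you invoke the elementary embedding $C^{0,\alpha}(\Gamma)\hookrightarrow W^{1-1/d,d}(\Gamma)$, valid precisely when $1-1/d<\alpha$, for a single $d\in(N,(1-\alpha)^{-1})$; one application of $W^{2,d}$ regularity then gives $\bar y,\bar\phi\in W^{2,d}(\Omega)\hookrightarrow C^{1,1-N/d}(\overline\Omega)\subset C^{0,1}(\overline\Omega)$. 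This sidesteps the fractional product rule --- the paper's main technical novelty --- since products of H\"{o}lder functions are trivially H\"{o}lder; what you give up is the conclusion $W^{2,k}$ for all $k$, but Lipschitz continuity needs only one $d>N$, so the theorem still follows. (Incidentally, your fractional-space bookkeeping inside the early bootstrap is more than is needed there: in that stage the paper works only with $L^\infty$ bounds and negative-order/dual data for the elliptic estimate, reserving the fractional calculus for the final step; this is a simplification available to you, not a gap in your argument.)
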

 
To prove the Lipschitz continuity regularity of minimizers and the corresponding multipliers, we need the following lemmas.
		
\begin{lemma}
	\label{lem:Holder-regularity}
	Under all assumptions of \cref{thm:Lipschitz}, there hold
	\begin{equation}
		\label{eq:W1k-regularity}
		\left\{
			\begin{aligned}
				\bar y, \bar \phi \in W^{1,k}(\Omega),\\   
				\bar u, \psi_1 \in L^\infty(\Omega),\\
				\bar v, \psi_2 \in L^\infty(\Gamma)
			\end{aligned}
		\right.
	\end{equation}
	for all $k \geq 1$. 
\end{lemma}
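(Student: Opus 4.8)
The plan is to run a bootstrapping argument starting from the regularity already granted by \cref{thm:OC}, namely $\bar\phi \in W^{1,s}(\Omega)$, $\psi_1 \in L^{p'}(\Omega)$, $\psi_2 \in L^{q'}(\Gamma)$, together with $\bar y \in H^1(\Omega)\cap C(\overline\Omega)$. The engine of the iteration is a pair of pointwise ``feedback'' formulas obtained by combining the stationarity conditions \eqref{eq:stationary-u}--\eqref{eq:stationary-v} with the complementarity conditions \eqref{eq:complementary-u}--\eqref{eq:complementary-v}. At a.e.\ $x$ one of two alternatives holds: either $\psi_1(x)=0$, in which case \eqref{eq:stationary-u} reduces to $\Delta_1(\bar u(x)) = -\bar\phi(x)$ and hence $\bar u(x)=\Delta_1^{-1}(-\bar\phi(x))$; or the constraint is active, in which case $\zeta_1(\bar u(x))=-g_1(x,\bar y(x))$ gives $\bar u(x)=H_1(-g_1(x,\bar y(x)))$. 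Since $\Delta_1(t)=\lambda_1 t+\lambda_2|t|^{p-2}t$ is a strictly increasing homeomorphism of $\R$ and $H_1$ is globally Lipschitz, this identifies $\bar u$ as a Lipschitz function (a pointwise min or max according to the monotonicity of $\zeta_1$) of $\bar\phi$ and of $\bar y$; the same reasoning on $\Gamma$ expresses $\bar v$ through $\gamma\bar\phi$ and $\bar y$ via $\Delta_2$ and $H_2$.

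First I would record the consequences for the multipliers. Because $\bar y\in C(\overline\Omega)$, the terms $H_i(-g_i(\cdot,\bar y))$ are bounded, so the feedback formulas show that on the inactive set $\Delta_i(\bar u)=-\bar\phi$ (resp.\ $\Delta_i(\bar v)=-\gamma\bar\phi$), while on the active set $\bar u$ (resp.\ $\bar v$) is bounded. Solving \eqref{eq:stationary-u}--\eqref{eq:stationary-v} for $\psi_i$ and using $|\zeta_i'|\geq\rho_i>0$ from \cref{ass:zeta-funcs} yields the pointwise bounds
\[
	|\psi_1|\leq \tfrac1{\rho_1}\big(|\Delta_1(\bar u)|+|\bar\phi|\big),\qquad |\psi_2|\leq\tfrac1{\rho_2}\big(|\Delta_2(\bar v)|+|\gamma\bar\phi|\big).
\]
In particular $\Delta_1(\bar u)$ and $\psi_1$ inherit the integrability of $\bar\phi$ on $\Omega$, and $\Delta_2(\bar v)$ and $\psi_2$ that of the trace $\gamma\bar\phi$ on $\Gamma$; the possibly superquadratic growth coming from $p,q>2$ is harmless because on the inactive set these quantities equal $-\bar\phi$ (resp.\ $-\gamma\bar\phi$) and on the active set they are bounded.

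The bootstrapping then proceeds as follows. Assuming $\bar\phi\in W^{1,k}(\Omega)$ for some $k$, the Sobolev embedding gives an integrability exponent for $\bar\phi$ and, by the trace theorem, for $\gamma\bar\phi$ on $\Gamma$; by the previous paragraph this transfers to $\psi_1$ on $\Omega$ and $\psi_2$ on $\Gamma$. Feeding $L_y'(\cdot,\bar y)+g_{1y}'(\cdot,\bar y)\psi_1$ and $\ell_y'(\cdot,\bar y)+g_{2y}'(\cdot,\bar y)\psi_2$ --- whose $\bar y$-dependent factors are bounded by \cref{ass:L-func,ass:g-funcs} --- into the adjoint system \eqref{eq:adjoint-state} and invoking \cite[Lem.~2.4]{CasasMateos2002} returns $\bar\phi\in W^{1,k_{\mathrm{new}}}(\Omega)$ with a strictly larger exponent. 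Iterating, the exponents increase until some $k>N$ is reached, at which point $W^{1,k}(\Omega)\hookrightarrow C(\overline\Omega)$ forces $\bar\phi\in L^\infty(\Omega)$ and $\gamma\bar\phi\in L^\infty(\Gamma)$; the feedback formulas then give $\bar u,\psi_1\in L^\infty(\Omega)$ and $\bar v,\psi_2\in L^\infty(\Gamma)$. With these $L^\infty$-data, the right-hand sides of \eqref{eq:adjoint-state} and of the state equation \eqref{eq:state} lie in $L^\infty(\Omega)\times L^\infty(\Gamma)$, so a final application of \cite[Lem.~2.4]{CasasMateos2002} yields $\bar\phi,\bar y\in W^{1,k}(\Omega)$ for every $k\geq 1$, which is exactly \eqref{eq:W1k-regularity}.

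The main obstacle is the bookkeeping of the Sobolev exponents across this coupled domain/boundary iteration: one must verify that each pass through the feedback formulas, the trace map, and the elliptic estimate \cite[Lem.~2.4]{CasasMateos2002} strictly increases the exponent and that the interior and boundary gains remain compatible, so that the process genuinely crosses the threshold $k=N$ after finitely many steps rather than stalling at a fixed integrability. Here the standing restrictions $p>N/2$ and $q>N-1$ --- equivalently the inequality $\tfrac1r+\tfrac1s<1$ of \cref{prop:conjugate} --- are what guarantee a favorable starting exponent. The only additional care needed is to treat the increasing and decreasing cases of $\zeta_1,\zeta_2$ uniformly, which is harmless since in every case $H_i$ is globally Lipschitz and the resulting projection is a Lipschitz min/max.
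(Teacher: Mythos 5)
Your proposal is correct and follows essentially the same route as the paper's own proof: the complementarity conditions \eqref{eq:complementary-u}--\eqref{eq:complementary-v} yield the active/inactive dichotomy, which controls $\psi_1,\psi_2$ pointwise by $\bar\phi$ (resp.\ $\gamma\bar\phi$) plus bounded terms, and then a bootstrap through the adjoint system \eqref{eq:adjoint-state} via the Sobolev embeddings and \cite[Lem.~2.4]{CasasMateos2002} raises the regularity of $\bar\phi$ by a fixed gain of $1/N$ in the reciprocal exponent per step until $\bar\phi\in W^{1,k}(\Omega)\hookrightarrow C(\overline\Omega)$ for some $k>N$, after which the feedback formulas give all the stated $L^\infty$ and $W^{1,k}$ conclusions exactly as in the paper. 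The only cosmetic slip is attributing the termination of the iteration to the inequality $\frac1r+\frac1s<1$ of \cref{prop:conjugate}; what actually prevents stalling is that each elliptic solve gains exactly $1/N$ while the starting exponent $s$ of \cref{thm:OC} satisfies $s>1$, so the threshold $k>N$ is crossed after at most $N$ steps.
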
		
\begin{proof}
	By setting
	\begin{align*}
		& \Omega_0 := \left\{x \in \Omega \mid \zeta_1(\bar u(x)) + g_1(x, \bar y(x)) = 0 \right\}
	\end{align*}
	we deduce from \eqref{eq:stationary-u} and \eqref{eq:complementary-u} that
	\begin{align}
		\psi_1(x) & = \begin{cases}
		-\frac{1}{\zeta_1'(\bar u(x))} \left[ \bar \phi(x)+ \Delta_1(\bar u(x)) \right]& \text{a.e. } x \in \Omega_0\\
		0 & \text{otherwise} 
		\end{cases} \notag \\
		& = \begin{cases}
		-\frac{1}{\zeta_1'(\bar u(x))} \left[ \bar \phi(x)+ \Delta_1(H_1(-g_1(x,\bar y(x)))) \right]& \text{a.e. } x \in \Omega_0\\
		0 & \text{otherwise} . \label{eq:psi1-expression}
		\end{cases}
	\end{align}
	Similarly, one has from \eqref{eq:stationary-v} and \eqref{eq:complementary-v} that
	\begin{align}
		\psi_2(x) 
		& = \begin{cases}
		-\frac{1}{\zeta_2'(\bar v(x))} \left[ \gamma\bar \phi(x)+ \Delta_2(H_2(-g_2(x,\bar y(x)))) \right]& \text{a.e. } x \in \Gamma_0\\
		0 & \text{a.e. } x \in \Gamma \backslash \Gamma_0, \label{eq:psi2-expression}
		\end{cases}
	\end{align}
	where
	\[
		\Gamma_0 := \left\{x \in \Gamma \mid \zeta_2(\bar v(x)) + g_2(x, \bar y(x)) = 0 \right\}.
	\]
	From Sobolev embeddings \cite{Adams,Kufner1977}, we have
	\begin{equation}
		\label{eq:Sobolev-embedding}
		\left\{
		\begin{aligned}
			&W^{1,t_0}(\Omega) \hookrightarrow \left(W^{1,t_1'}(\Omega) \right)^*\\
			& \gamma\left(W^{1,t_0}(\Omega) \right) \hookrightarrow W^{-\frac{1}{t_1},t_1}(\Gamma)
		\end{aligned}  
		\right. \quad \text{for} \quad 
		\left[
			\begin{aligned}
				 &1 \leq t_0 < N, \quad \frac{1}{t_1} = \frac{1}{t_0}- \frac{1}{N},\\
				 &  t_0 \geq N, \quad t_1 \geq 1 \text{ arbitrary}.
			\end{aligned}
		\right.
	\end{equation}
	It then follows from \eqref{eq:psi1-expression}, \eqref{eq:psi2-expression}, \eqref{eq:Sobolev-embedding} and the fact $\bar y \in C(\overline\Omega)$ and $\bar \phi \in W^{1,s}(\Omega)$ that 
	\[
		\psi_1 \in \left(W^{1,k_1'}(\Omega) \right)^* \quad \text{and} \quad \psi_2 \in W^{-\frac{1}{k_1},k_1}(\Gamma)
	\]
	with $k_1$ satisfying
	\[
		\left\{
		\begin{aligned}
			k_1 \geq 1 \, \text{arbitrary},  \quad &  \text{if }  s \geq N,\\
			\frac{1}{k_1} = \frac{1}{k_0} - \frac{1}{N},\quad &  \text{if } k_0 := s \in(1,N).
		\end{aligned}
		\right.
	\]
	The right hand-side terms of \eqref{eq:adjoint-state} corresponding to $\psi_1$ and $\psi_2$, respectively, therefore belong to $ \left(W^{1,k_1'}(\Omega) \right)^*$ and $W^{-\frac{1}{k_1},k_1}(\Gamma)$ because of \crefrange{ass:L-func}{ass:zeta-funcs} and of the fact that $\bar y \in C(\overline\Omega)$. 
	From this and \cite[Lem.~2.4]{CasasMateos2002}, there holds $\bar \phi \in W^{1,k_1}(\Omega)$. Using \eqref{eq:psi1-expression}, \eqref{eq:psi2-expression} and \eqref{eq:Sobolev-embedding} again, we have
	\[
		\psi_1 \in \left(W^{1,k_2'}(\Omega) \right)^* \quad \text{and} \quad \psi_2 \in W^{-\frac{1}{k_2},k_2}(\Gamma)
	\]
	for $\frac{1}{k_2} = \frac{1}{k_1} - \frac{1}{N} = \frac{1}{k_0} - \frac{2}{N}$ if $k_1 <N$ and for $k_2 \geq 1$ arbitrary if $k_1 \geq N$. 
	\cite[Lem.~2.4]{CasasMateos2002} repeatedly implies that $\bar\phi \in W^{1,k_2}(\Omega)$. The bootstrapping argument finally gives
	
	\[
		\bar\phi \in W^{1,k_m}(\Omega) \hookrightarrow C(\overline{\Omega})
	\]
	for some $k_m > N$. This together with \eqref{eq:psi1-expression} and \eqref{eq:psi2-expression} as well as the continuity of $\bar y$ on $\overline\Omega$ yields 
	\begin{equation} \label{eq:L-infty-psi1}
		\psi_1 \in L^\infty(\Omega) \quad \text{and} \quad \psi_2 \in L^\infty(\Gamma).
	\end{equation}
	The regularity of solutions to \eqref{eq:adjoint-state} infers that 
	\begin{equation}
		\label{eq:W1k-regularity-phi}
		\bar\phi \in W^{1,k}(\Omega) \quad \text{for all } k \geq 1.
	\end{equation}
	The Sobolev embedding \cite{Adams} therefore implies that
	\begin{equation}
		\label{eq:Holder-phi} 
		\bar \phi \in C^{0,\gamma_0}(\overline\Omega) \quad \text{for any } \gamma_0 \in [0,1).
	\end{equation}
	By definition, function $\Delta_1$ is of class $C^1$ and satisfies $\Delta_1'(t) = \lambda_1 + (p-1)\lambda_2 |t|^{p-2} \geq \lambda_1 >0$ for all $t \in \R$. Its inverse is also of class $C^1$ and fulfills
	\begin{equation} \label{eq:Delta-inverse}
		0 < (\Delta_1^{-1})'(\tau) \leq \frac{1}{\lambda_1} \quad \text{for all } \tau \in \R. 
	\end{equation}
	In particular, $\Delta_1^{-1}$ is globally Lipschitz continuous. 
	By virtue of \eqref{eq:stationary-u}, we have
	\begin{equation} \label{eq:u-expression}
		\bar u = \Delta_1^{-1}\left[ -\bar\phi - \zeta_1'(\bar u)\psi_1 \right].
	\end{equation}
	Moreover, we deduce from  \eqref{eq:complementary-u} that 
	\[
		\zeta_1'(\bar u) \psi_1 = \begin{cases}
		0 & \text{a.e. in }  \Omega\backslash \Omega_0,\\
		\psi_1 \zeta_1'\left(H_1(-g_1(\cdot, \bar y)) \right) & \text{a.e. in }  \Omega_0,
		\end{cases}
	\]	
	which together with \eqref{eq:L-infty-psi1} yields $\zeta_1'(\bar u) \psi_1 \in L^\infty(\Omega)$. 
	This, \eqref{eq:Holder-phi}, \eqref{eq:u-expression}, and the global Lipschitz continuity of $\Delta_1^{-1}$ give 
	\begin{equation}
		\label{eq:L-infty-u}
		\bar u \in L^\infty(\Omega).
	\end{equation}
	Similarly, one has 
	\begin{equation}
		\label{eq:L-infty-v}
		\bar v \in L^\infty(\Gamma).
	\end{equation}
	The regularity of solutions to \eqref{eq:state} thus shows that 
	\begin{equation} \label{eq:W1k-regularity-state}
		\bar y \in W^{1,k}(\Omega) \quad \text{for all } k\geq 1.
	\end{equation}
	The conclusion \eqref{eq:W1k-regularity} finally follows from \eqref{eq:L-infty-psi1}, \eqref{eq:W1k-regularity-phi}, \eqref{eq:L-infty-u}, \eqref{eq:L-infty-v} and \eqref{eq:W1k-regularity-state}.
\end{proof} 

The next lemma shows that the Nemytskii operator corresponding to a Lipschitz function is from $W^{\tau,k}(\Gamma)$ to itself.
\begin{lemma}
	\label{lem:chain-rule}
	Assume that $\Omega$ be a bounded domain in $\R^N$ with a Lipschitz boundary $\Gamma$. 
	Let $a: \Gamma \times \R \to \R$ be Lipschitz continuous  and let $v \in W^{\tau,k}(\Gamma)$ for some $\tau \in (0,1)$ and $k\geq 1$. Then, there holds
	\begin{equation*}
		a(\cdot,v(\cdot)) \in W^{\tau,k}(\Gamma).
	\end{equation*}
	Moreover, there exists a constant $C=C(N,\tau, k, \Lip(a))$ independent of $v$ such that
	\begin{equation}
		\label{eq:chain-rule-esti}
		\norm{a(\cdot, v(\cdot))}_{W^{\tau,k}(\Gamma)} \leq C\left[ \norm{v}_{W^{\tau,k}(\Gamma)} + \norm{a(\cdot,0)}_{L^k(\Gamma)} + 1  \right].
	\end{equation}
	Here $\Lip(a)$ stands for the Lipschitz constant of $a$.
\end{lemma}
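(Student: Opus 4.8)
The plan is to estimate the two pieces of the norm $\norm{a(\cdot,v(\cdot))}_{W^{\tau,k}(\Gamma)}^k = \norm{a(\cdot,v(\cdot))}_{L^k(\Gamma)}^k + I_{\tau,k}(a(\cdot,v(\cdot)))$ separately, using nothing beyond the Lipschitz bound on $a$ and the $(N-1)$-dimensional geometry of $\Gamma$. For the $L^k$-part I would write, for a.e. $x\in\Gamma$,
\[
	|a(x,v(x))| \leq |a(x,v(x)) - a(x,0)| + |a(x,0)| \leq \Lip(a)\,|v(x)| + |a(x,0)|,
\]
raise to the $k$-th power, use the elementary inequality $(s+t)^k \leq 2^{k-1}(s^k+t^k)$, and integrate over $\Gamma$ to obtain $\norm{a(\cdot,v(\cdot))}_{L^k(\Gamma)}^k \leq 2^{k-1}\big(\Lip(a)^k\norm{v}_{L^k(\Gamma)}^k + \norm{a(\cdot,0)}_{L^k(\Gamma)}^k\big)$.

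For the Gagliardo seminorm $I_{\tau,k}$, the key pointwise estimate is the full joint Lipschitz bound
\[
	|a(x,v(x)) - a(x',v(x'))| \leq \Lip(a)\big(|x-x'| + |v(x)-v(x')|\big).
\]
Raising to the $k$-th power and applying $(s+t)^k \leq 2^{k-1}(s^k+t^k)$ once more splits the integral defining $I_{\tau,k}(a(\cdot,v(\cdot)))$ into two terms: one is $2^{k-1}\Lip(a)^k\,I_{\tau,k}(v)$, which is already controlled by $\norm{v}_{W^{\tau,k}(\Gamma)}$; the other is the purely geometric integral
\[
	2^{k-1}\Lip(a)^k \iint_{\Gamma\times\Gamma} \frac{|x-x'|^k}{|x-x'|^{N-1+\tau k}}\,d\sigma(x)d\sigma(x') = 2^{k-1}\Lip(a)^k \iint_{\Gamma\times\Gamma} |x-x'|^{k(1-\tau)-(N-1)}\,d\sigma(x)d\sigma(x').
\]

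The main step — and the only point requiring care — is showing that this last geometric integral is finite, with a value $C_0 = C_0(N,\tau,k,\Gamma)$ depending only on the data and not on $v$. I would fix $x\in\Gamma$ and estimate the inner integral in $x'$ by passing to a local graph parametrization of the Lipschitz boundary, in which $\Gamma$ is $(N-1)$-dimensional and $|x-x'|$ is comparable to the intrinsic distance; splitting into an annular decomposition near the diagonal, the integrand weighted by surface measure behaves like $\rho^{k(1-\tau)-(N-1)}\cdot\rho^{N-2}\,d\rho = \rho^{k(1-\tau)-1}\,d\rho$, which is integrable at $\rho=0$ precisely because $k(1-\tau)>0$ for $\tau\in(0,1)$ and $k\geq 1$. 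Away from the diagonal there is no singularity and $\Gamma$ is compact with finite measure, so the inner integral is bounded uniformly in $x$, and integrating in $x$ gives $C_0<\infty$. With all three bounds in hand, I would combine them into $\norm{a(\cdot,v(\cdot))}_{W^{\tau,k}(\Gamma)}^k \leq 2^{k-1}\Lip(a)^k\norm{v}_{W^{\tau,k}(\Gamma)}^k + 2^{k-1}\norm{a(\cdot,0)}_{L^k(\Gamma)}^k + 2^{k-1}\Lip(a)^k C_0$, then take $k$-th roots and invoke the subadditivity of $t\mapsto t^{1/k}$; absorbing the constant term $(2^{k-1}\Lip(a)^kC_0)^{1/k}$ into the ``$+1$'' yields exactly \eqref{eq:chain-rule-esti} with $C=C(N,\tau,k,\Lip(a))$.
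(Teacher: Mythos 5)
Your proof is correct and follows essentially the same route as the paper: the same splitting of $\norm{a(\cdot,v(\cdot))}_{W^{\tau,k}(\Gamma)}^k$ into the $L^k$ part and the Gagliardo seminorm, the same joint Lipschitz bound $|a(x,v(x))-a(x',v(x'))|\leq \Lip(a)\left(|x-x'|+|v(x)-v(x')|\right)$ combined with $(s+t)^k\leq 2^{k-1}(s^k+t^k)$, and the same reduction to the finiteness of $\iint_{\Gamma\times\Gamma}|x-x'|^{k(1-\tau)-(N-1)}\,d\sigma(x)d\sigma(x')$. The only difference is that where the paper simply cites this last fact (Kufner et al., since the exponent $N-1+(\tau-1)k$ is strictly below the dimension $N-1$ of $\Gamma$), you sketch the standard direct argument via local graph coordinates and an annular decomposition, which is a valid, self-contained substitute.
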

\begin{proof}
	It suffices to prove \eqref{eq:chain-rule-esti}. To this end, by definition of the norm in $W^{\tau,k}(\Gamma)$, the Lipschitz continuity of $a$, and the inequality $(a_1+a_2)^k \leq 2^{k-1}(a_1^k+a_2^k)$ for $a_1,a_2>0$, $k\geq 1$, we have
	\begin{align*}
		\norm{a(\cdot, v(\cdot))}_{W^{\tau,k}(\Gamma)}^k & = \int_\Gamma |a(x,v(x))|^k d\sigma(x) + \iint_{\Gamma \times\Gamma} \frac{|a(x,v(x))-a(x',v(x'))|^{k}}{|x-x'|^{N-1+\tau k}}d\sigma(x)d\sigma(x') \\
		& \leq \int_\Gamma \left[|a(x,v(x)) - a(x,0)| + |a(x,0)| \right]^k d\sigma(x) \\
		& \qquad + \Lip(a)^k \iint_{\Gamma \times\Gamma} \frac{\left[|x-x'| + |v(x)-v(x')| \right]^k }{|x-x'|^{N-1+\tau k}}d\sigma(x)d\sigma(x')\\
		& \leq 2^{k-1} \int_\Gamma \left[ |a(x,v(x)) - a(x,0)|^k + |a(x,0)|^k \right]d\sigma(x) +\\
		& \qquad + 2^{k-1} \Lip(a)^k \iint_{\Gamma \times\Gamma} \frac{\left[|x-x'|^k + |v(x)-v(x')|^k \right]}{|x-x'|^{N-1+\tau k}} d\sigma(x)d\sigma(x')\\
		& \leq  2^{k-1} \int_\Gamma \left[ \Lip(a)^k |v(x)|^k+ |a(x,0)|^k \right]d\sigma(x)\\
		& \qquad  + 2^{k-1} \Lip(a)^k \iint_{\Gamma \times\Gamma}  \frac{1}{|x-x'|^{N-1 +(\tau -1)k}}d\sigma(x)d\sigma(x') \\
		& \qquad + 2^{k-1} \Lip(a)^k \iint_{\Gamma \times\Gamma}   \frac{|v(x)-v(x')|^k}{|x-x'|^{N-1+\tau k}} d\sigma(x)d\sigma(x')\\
		& \leq C(k,\Lip(a)) \left[ \norm{v}^k_{W^{\tau,k}(\Gamma)} + \norm{a(\cdot,0)}_{L^k(\Gamma)}^k\right] \\
		& \qquad  +  2^{k-1} \Lip(a)^k \iint_{\Gamma \times\Gamma}  \frac{1}{|x-x'|^{N-1 +(\tau -1)k}}d\sigma(x)d\sigma(x') .
	\end{align*}
	Since $N-1 + (\tau-1)k<N-1$ and $\Gamma$ is a $(N-1)-$dimensional Lipschitz submanifold of $\R^N$, there holds that
	\begin{equation*}
		\iint_{\Gamma \times\Gamma}  \frac{1}{|x-x'|^{N-1 +(\tau -1)k}}d\sigma(x)d\sigma(x') < C
	\end{equation*}
	for some constant $C$ depending only on $(N-1)+(\tau-1)k$; see, e.g. \cite[p.~331]{Kufner1977}. We therefore derive \eqref{eq:chain-rule-esti}.
\end{proof}		
\begin{remark}
	\label{rem:chain-rule}
	Assume that $a: \Gamma \times \R \to \R$ fulfills the property that, for any $M>0$, there is a constant $C_{M,a}$ such that
	\begin{equation}
		|a(x_1,y_1)-a(x_2,y_2)| \leq C_{M,a}\left( |x_1-x_2| + |y_1-y_2|\right)
	\end{equation}
	for all $x_1, x_2 \in \Gamma$ and $y_1,y_2\in \R$ with $|y_i| \leq M$ for $i=1,2$. Then, for any $M>0$, by using similar arguments as in the proof of \cref{lem:chain-rule}, there is a constant $C=C(k,\tau, N, C_{M,a})$ satisfying
	\begin{equation}
		\label{eq:chain-rule-esti-2}
		\norm{a(\cdot, v(\cdot))}_{W^{\tau,k}(\Gamma)} \leq C \left[ \norm{v}_{W^{\tau,k}(\Gamma)} + \norm{a(\cdot,0)}_{L^k(\Gamma)} + 1  \right]
	\end{equation}
	for any $v \in W^{\tau, k}(\Gamma) \cap L^\infty(\Gamma)$ with $\tau \in (0,1)$, $k \geq 1$, and $\norm{v}_{L^\infty(\Gamma)} \leq M$.
\end{remark}

The following result indicates that the product of functions in Sobolev spaces of fractional order is also a $W^{\tau,k}(\Gamma)$-function.
\begin{lemma}
	\label{lem:production} 
	Let $\Omega$ be a bounded domain in $\R^N$ with a Lipschitz boundary $\Gamma$. Let $\tau, \tau_1, \tau_2 \in (0,1)$ and $k, k_1, k_2 \geq 1$ be such that
	\begin{equation}
		\label{eq:product-constants}
			\left\{
		\begin{aligned}
		& \frac{1}{k} = \frac{1}{k_1} + \frac{1}{k_2},\\
		& 0 < \tau < \min\{\tau_1, \tau_2 \}.
		\end{aligned}
		\right.
	\end{equation}
	Then, for any  $v_1 \in W^{\tau_1,k_1}(\Gamma)$ and $v_2 \in W^{\tau_2,k_2}(\Gamma)$, there hold that $v_1v_2 \in W^{\tau,k}(\Gamma)$ and that
	\begin{equation}
		\label{eq:product-Wk}
		\norm{v_1v_2}_{W^{\tau,k}(\Gamma)} \leq C \norm{v_1}_{W^{\tau_1,k_1}(\Gamma)} \norm{v_2}_{W^{\tau_2,k_2}(\Gamma)} 
	\end{equation}
	for some constant $C=C(N,\tau,\tau_1,\tau_2,k,k_1,k_2)$.
\end{lemma}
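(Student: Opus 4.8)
The plan is to estimate separately the two constituents of the norm $\norm{v_1v_2}_{W^{\tau,k}(\Gamma)}^k = \norm{v_1v_2}_{L^k(\Gamma)}^k + I_{\tau,k}(v_1v_2)$, where $I_{\tau,k}$ is the Gagliardo term from \eqref{eq:fractional-term}. The $L^k$-part is immediate: since $\frac1k = \frac{1}{k_1}+\frac{1}{k_2}$, Hölder's inequality gives $\norm{v_1v_2}_{L^k(\Gamma)} \le \norm{v_1}_{L^{k_1}(\Gamma)} \norm{v_2}_{L^{k_2}(\Gamma)} \le \norm{v_1}_{W^{\tau_1,k_1}(\Gamma)} \norm{v_2}_{W^{\tau_2,k_2}(\Gamma)}$. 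Hence everything reduces to bounding the seminorm $I_{\tau,k}(v_1v_2)$.

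For the seminorm I would first use the algebraic identity $v_1(x)v_2(x) - v_1(x')v_2(x') = v_1(x)\bigl(v_2(x)-v_2(x')\bigr) + v_2(x')\bigl(v_1(x)-v_1(x')\bigr)$ together with $(a_1+a_2)^k \le 2^{k-1}(a_1^k+a_2^k)$, which splits $I_{\tau,k}(v_1v_2) \le 2^{k-1}(T_1+T_2)$ with
\begin{equation*}
  T_1 := \iint_{\Gamma\times\Gamma} \frac{|v_1(x)|^k\,|v_2(x)-v_2(x')|^k}{|x-x'|^{N-1+\tau k}}\,d\sigma(x)d\sigma(x'), \qquad T_2 := \iint_{\Gamma\times\Gamma} \frac{|v_2(x')|^k\,|v_1(x)-v_1(x')|^k}{|x-x'|^{N-1+\tau k}}\,d\sigma(x)d\sigma(x').
\end{equation*}
By symmetry it suffices to treat $T_1$. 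I note first that $\frac1k = \frac{1}{k_1}+\frac{1}{k_2}$ forces $k < k_2$, so the exponent $\tfrac{k_2}{k}$ and its conjugate are admissible.

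The core of the argument is a two-stage application of Hölder's inequality to $T_1$. For fixed $x$ I would split the kernel $|x-x'|^{-(N-1+\tau k)}$ and apply Hölder in the $x'$-variable with exponents $\tfrac{k_2}{k}$ and $\tfrac{k_2}{k_2-k}$ to obtain
\begin{equation*}
  \int_\Gamma \frac{|v_2(x)-v_2(x')|^k}{|x-x'|^{N-1+\tau k}}\,d\sigma(x') \le \left( \int_\Gamma \frac{|v_2(x)-v_2(x')|^{k_2}}{|x-x'|^{N-1+\tau_2 k_2}}\,d\sigma(x') \right)^{k/k_2} \left( \int_\Gamma \frac{d\sigma(x')}{|x-x'|^{\beta}} \right)^{(k_2-k)/k_2},
\end{equation*}
where $\beta := N-1+\tau k - \tfrac{k}{k_2}(N-1+\tau_2 k_2)$, so that $\beta\tfrac{k_2}{k_2-k} = (N-1) - (\tau_2-\tau)k\tfrac{k_2}{k_2-k}$. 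Since $\tau < \tau_2$, this exponent is strictly below $N-1$, so, exactly as for the leftover integral in \cref{lem:chain-rule} (see \cite[p.~331]{Kufner1977}), the inner $x'$-integral of the singular kernel is finite and bounded uniformly in $x$. Substituting this into $T_1$ and applying Hölder once more in the $x$-variable, now with the conjugate exponents $\tfrac{k_1}{k}$ and $\tfrac{k_2}{k}$ (which are conjugate precisely because $\frac1k = \frac{1}{k_1}+\frac{1}{k_2}$), pulls out $\norm{v_1}_{L^{k_1}(\Gamma)}^k$ together with the full seminorm $I_{\tau_2,k_2}(v_2)^{k/k_2}$, yielding $T_1 \le C\,\norm{v_1}_{W^{\tau_1,k_1}(\Gamma)}^k \norm{v_2}_{W^{\tau_2,k_2}(\Gamma)}^k$. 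The same computation with the roles of $v_1,v_2$ interchanged (using $\tau<\tau_1$ and $k<k_1$) controls $T_2$, and combining the two bounds with the $L^k$-estimate gives \eqref{eq:product-Wk}.

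I expect the main obstacle to be the exponent bookkeeping in the two Hölder steps: verifying that the leftover kernel exponent $\beta\tfrac{k_2}{k_2-k}$ is genuinely below the critical value $N-1$ (this is exactly where the hypothesis $\tau<\min\{\tau_1,\tau_2\}$ enters), and that the outer Hölder exponents are conjugate (this is where $\frac1k=\frac{1}{k_1}+\frac{1}{k_2}$ enters), so that the difference-quotient factor reassembles into the correct Gagliardo seminorm while the remaining singular integral stays uniformly bounded over $\Gamma$ by virtue of its $(N-1)$-dimensional Lipschitz-manifold structure.
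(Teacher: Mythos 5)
Your proposal is correct and follows essentially the same route as the paper's proof: the same splitting of $v_1(x)v_2(x)-v_1(x')v_2(x')$, the same $L^k$-estimate via H\"{o}lder with $\tfrac1k=\tfrac1{k_1}+\tfrac1{k_2}$, and the same mechanism of absorbing the kernel deficit into a singular integral with exponent strictly below $N-1$ (which is exactly where $\tau<\min\{\tau_1,\tau_2\}$ enters). The only difference is organizational: the paper splits the kernel and applies H\"{o}lder once over the product $\Gamma\times\Gamma$ with exponents $k_1/k$ and $k_2/k$, whereas you iterate H\"{o}lder first in $x'$ and then in $x$; both yield the same factors $\norm{v_1}_{L^{k_1}(\Gamma)}^k\, I_{\tau_2,k_2}(v_2)^{k/k_2}$, so this is the same proof in slightly different bookkeeping.
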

\begin{proof}
	It is sufficient to show \eqref{eq:product-Wk}. To prove this, by definition of the norm in $W^{\tau,k}(\Gamma)$-space, we have
	\begin{equation}
		\label{eq:prod-esti1}
		\norm{v_1v_2}_{W^{\tau,k}(\Gamma)}^k = \norm{v_1v_2}_{L^k(\Gamma)}^k + I_{\tau,k}(v_1v_2), 
	\end{equation}
	where $I_{\tau,k}$ is defined in \eqref{eq:fractional-term}. From the first relation in \eqref{eq:product-constants} and the H\"{o}lder inequality, we derive
	\begin{equation}
		\label{eq:no-frac-term}
		\norm{v_1v_2}_{L^k(\Gamma)} \leq \norm{v_1}_{L^{k_1}(\Gamma)}\norm{v_2}_{L^{k_2}(\Gamma)}.
	\end{equation}
	On the other hand, using the inequality $(a_1+a_2)^k \leq 2^{k-1}(a_1^k+a_2^k)$ with $a_1, a_2 >0$  yields
	\begin{equation}
		\label{eq:prod-esti2}
		\begin{aligned}[b]
				I_{\tau,k}(v_1v_2) & = \iint_{\Gamma \times\Gamma} \frac{|v_1(x)v_2(x)-v_1(x')v_2(x')|^k}{|x-x'|^{N-1+\tau k}}d\sigma(x)d\sigma(x') \\
			& \leq \iint_{\Gamma \times\Gamma} \frac{\left[|v_1(x)||v_2(x)-v_2(x')| + |v_2(x')||v_1(x)-v_1(x')| \right]^k}{|x-x'|^{N-1+\tau k}}d\sigma(x)d\sigma(x')\\
			& \leq 2^{k-1} \iint_{\Gamma \times\Gamma} \frac{|v_1(x)|^k|v_2(x)-v_2(x')|^k }{|x-x'|^{N-1+\tau k}}d\sigma(x)d\sigma(x') \\
			&\qquad \qquad +  2^{k-1} \iint_{\Gamma \times\Gamma} \frac{ |v_2(x')|^k|v_1(x)-v_1(x')|^k}{|x-x'|^{N-1+\tau k}}d\sigma(x)d\sigma(x') \\
			& =: 2^{k-1}A+ 2^{k-1}B.
		\end{aligned}
	\end{equation}
	Since $1 = \frac{k}{k_1} + \frac{k}{k_2}$, we can write
	\begin{align*}
		A & = \iint_{\Gamma \times\Gamma} \frac{|v_1(x)|^k|v_2(x)-v_2(x')|^k }{|x-x'|^{N-1+\tau k}}d\sigma(x)d\sigma(x')\\
		& = \iint_{\Gamma \times\Gamma} \frac{|v_1(x)|^k}{|x-x'|^{(N-1)\frac{k}{k_1}+(\tau-\tau_2)k}} \frac{|v_2(x)-v_2(x')|^k}{|x-x'|^{(N-1)\frac{k}{k_2}+\tau_2 k}} d\sigma(x)d\sigma(x')\\
		& \leq \left[ \iint_{\Gamma \times\Gamma} \frac{|v_1(x)|^{k_1}}{|x-x'|^{(N-1)+(\tau-\tau_2)k_1}} d\sigma(x)d\sigma(x') \right]^{k/{k_1}} \\
		&\qquad \qquad \times  \left[\iint_{\Gamma \times\Gamma} \frac{|v_2(x)-v_2(x')|^{k_2}}{|x-x'|^{(N-1)+\tau_2 k_2}} d\sigma(x)d\sigma(x')\right]^{k/k_2} \\
		& = \left[ \iint_{\Gamma \times\Gamma} \frac{|v_1(x)|^{k_1}}{|x-x'|^{(N-1)+(\tau-\tau_2)k_1}} d\sigma(x)d\sigma(x') \right]^{k/{k_1}} I_{\tau_2,k_2}(v_2)^k,
	\end{align*}
	where we have just used the H\"{o}lder inequality to derive the last estimate.
	Since $0<\tau < \tau_2$, there holds $N-1 + (\tau-\tau_2)k_1 < N-1$ and we  thus have from the Lipschitz continuity of the boundary $\Gamma$ that
	\[
		\int_\Gamma \frac{d\sigma(x')}{|x-x'|^{(N-1)+(\tau-\tau_2)k_1}} \leq  C(N,\tau,\tau_2,k_1) := C_1.
	\]
	for all $x \in \Gamma$. This implies that
	\[
		\iint_{\Gamma \times\Gamma} \frac{|v_1(x)|^{k_1}}{|x-x'|^{(N-1)+(\tau-\tau_2)k_1}} d\sigma(x)d\sigma(x') \leq C_1\norm{v_1}_{L^{k_1}(\Gamma)}^{k_1}.
	\]
	Hence, we have
	\[
		A \leq C_1 \norm{v_1}_{L^{k_1}(\Gamma)}^{k}I_{\tau_2,k_2}(v_2)^k.
	\]
	Analogously, one has
	\[
		B \leq C_2 \norm{v_2}_{L^{k_2}(\Gamma)}^{k}I_{\tau_1,k_1}(v_1)^k
	\]
	for some $C_2 := C(N,\tau,\tau_1,k_2)$. Therefore, \eqref{eq:prod-esti2} implies that
	\begin{equation*}
		I_{\tau,k}(v_1v_2)^{1/k}  \leq C \left(\norm{v_1}_{L^{k_1}(\Gamma)}I_{\tau_2,k_2}(v_2) + \norm{v_2}_{L^{k_2}(\Gamma)}I_{\tau_1,k_1}(v_1) \right)
	\end{equation*}
	for some $C=C(N,\tau,\tau_1,\tau_2,k,k_1,k_2)$. Combing this with \eqref{eq:prod-esti1} and \eqref{eq:no-frac-term}, we arrive at the desired conclusion.
\end{proof}

\medskip

\noindent\textbf{Proof of \cref{thm:Lipschitz}}.
If suffices to consider the case where $\zeta_1$ and $\zeta_2$ are both monotonically increasing since the other cases can be dealt with analogously. 
Assume that $\zeta_1$ and $\zeta_2$ are both monotonically increasing. We then have
\begin{align*}
	& \zeta_1'(t) \geq \rho_1 >0, \quad \zeta_2'(\tau) \geq \rho_2 >0,\\
	& K_1 = - K_\Omega = \left\{ u \in L^p(\Omega) \mid u \leq 0 \, \text{a.e. in } \Omega \right\}
	\intertext{and}
	& K_2 =- K_\Gamma = \left\{ v \in L^q(\Gamma) \mid v \leq 0\, \text{a.e. in } \Gamma \right\}.
\end{align*}
Let $e_1$ and $e_2$ be the functions determined in the proof of \cref{thm:OC}. From \eqref{eq:normal-condition-i} and \cite[Lem.~2.4]{KienNhuSon2017} (see also \cite[Lem.~4.11]{BayenBonnansSilva2013}), we conclude that
\[
	e_2(x) \in N((-\infty, 0]; \bar v(x) - H_2(- g_2(x,\bar y(x)))) \quad \text{for a.e. } x \in \Gamma,
\]
where
\[
	N((-\infty, 0]; t) := \left\{\tau \in \R \mid \tau(m-t) \leq 0 \, \text{for all } m \leq 0  \right\}.
\]
Consequently, one has
\[
	e_2(x)\left(m -\bar v(x) + H_2( -g_2(x,\bar y(x))) \right) \leq 0 \quad \text{for a.e. } x \in \Gamma \quad \text{and for all } m \leq 0.
\]
Since $\psi_2 =\frac{e_2}{\zeta_2'(\bar v)}$ and $\zeta_2'(\cdot) \geq \rho_2 >0$, there holds
\[
	\zeta_2'(\bar v(x))\psi_2(x)\left(m - \bar v(x) + H_2(-g_2(x,\bar y(x))) \right) \leq 0 \quad \text{for a.e. } x \in \Gamma \quad \text{and for all } m \leq 0.
\]
Combining this with \eqref{eq:stationary-v} yields
\[
	\left[ \Delta_2(\bar v(x)) + \bar\phi(x) \right]\left(m - \bar v(x) + H_2(-g_2(x,\bar y(x))) \right) \geq 0 \quad \text{for a.e. } x \in \Gamma \quad \text{and for all } m \leq 0.
\]	
It is noted that we have used the fact that $\gamma\bar\phi = \bar\phi$ on $\Gamma$ due to the H\"{o}lder continuity \eqref{eq:Holder-phi} of $\bar\phi$.
Since $\Delta_2'(\tau) \geq \mu_1>0$ for all $\tau \in \R$, $\Delta_2^{-1}$ exists and is globally Lipschitz continuous. 
Setting $\bar \omega_2 := \Delta_2^{-1}(-\bar\phi)$, it follows from \eqref{eq:W1k-regularity} that
\begin{equation} \label{eq:W1k-regularity-omega}
	\bar\omega_2 \in W^{1,k}(\Omega) \quad \text{for all } k \geq 1.
\end{equation}
Furthermore, $\bar\omega_2$ satisfies
\[
	\left[ \Delta_2(\bar v(x)) - \Delta_2(\bar\omega_2(x)) \right]\left(m - \bar v(x) + H_2(-g_2(x,\bar y(x))) \right) \geq 0
\]
for a.e.  $x \in \Gamma$ and for all $m \leq 0$.
The strictly increasing monotonicity of $\Delta_2$ then gives
\begin{equation*} 
	\left( \bar v(x) - \bar \omega_2(x)\right) \left(m - \bar v(x) + H_2(-g_2(x,\bar y(x))) \right) \geq 0 \quad \text{for a.e. } x \in \Gamma \, \text{and for all } m \leq 0,
\end{equation*}
or equivalently,
\begin{multline*}
	\left[  \bar \omega_2(x) - H_2(-g_2(x,\bar y(x))) - \left(\bar v(x)- H_2(-g_2(x,\bar y(x)))\right) \right] \\
	 \times \left[ m- \left(\bar v(x) - H_2(-g_2(x,\bar y(x))) \right)\right] \leq 0
\end{multline*}
for a.e. $x \in \Gamma$ and for all $m \leq 0$. This and the projection onto a closed convex set \cite[Thm.~5.2]{Brezis2010} imply that
\begin{equation}
	\label{eq:projection-v}
	\bar v(x) = \proj_{(-\infty, 0]}\left[ \bar \omega_2(x) - H_2(-g_2(x,\bar y(x))) \right] + H_2(-g_2(x,\bar y(x))) \quad \text{for a.e. } x \in \Gamma.
\end{equation}
Here $\proj_{(-\infty, 0]}(\cdot)$ denotes the projection mapping from $\R$ onto $(-\infty,0]$. 
Moreover, the global Lipschitz continuity of $H_2$ (see the proof of \cref{lem:diff-j-G}), \eqref{eq:W1k-regularity-state}, and \cref{lem:chain-rule} imply that
$H_2(-g_2(\cdot,\bar y(\cdot))) \in W^{\frac{1}{k'}, k}(\Gamma)$ for all $k \geq 1$. Combing this with \eqref{eq:W1k-regularity-omega}, \eqref{eq:projection-v} and the Lipschitz continuity of the projection mapping yields
\begin{equation}
	\label{eq:Wk-regularity-v}
	\bar v \in W^{\frac{1}{k'}, k}(\Gamma) \quad \text{for all } k \geq 1.
\end{equation}
Analogous to \eqref{eq:projection-v} and \eqref{eq:Wk-regularity-v}, we respectively have
\begin{align}
	&
	\bar u(x) = \proj_{(-\infty, 0]}\left[ \bar \omega_1(x) - H_1(-g_1(x,\bar y(x))) \right]+H_1(-g_1(x,\bar y(x))) \quad \text{for a.e. } x \in \Omega \label{eq:projection-u} \\
	\intertext{and}
	& \bar u \in W^{1,k}(\Omega) \quad \text{for all } k \geq 1, \label{eq:W1k-regularity-u}
\end{align}
where $\omega_1 := \Delta_1^{-1}(-\bar\phi)$. 
From \eqref{eq:Wk-regularity-v} and \eqref{eq:W1k-regularity-u}, the regularity of solutions to \eqref{eq:state} (see; e.g. \cite{Grisvard1985}) yields 
\begin{equation}
	\label{eq:W2k-regularity-state}
	\bar y \in  W^{2,k}(\Omega) \quad \text{for all } k\geq 1,
\end{equation}
which together with the Sobolev embedding \cite{Adams,Brezis2010} gives the Lipschitz continuity on $\overline\Omega$ of $\bar y$. This, \cref{ass:g-funcs}, and \eqref{ass:Lip-L} as well as \eqref{ass:Lip-l} guarantee the Lipschitz continuity of functions $L_y'(\cdot, \bar y(\cdot))$, $g_{1y}'(\cdot,\bar y(\cdot))$ on $\overline\Omega$ and $\ell_y'(\cdot,\bar y(\cdot))$, $g_{2y}'(\cdot,\bar y(\cdot))$ on $\Gamma$. To derive the $W^{2,k}$-regularity of $\bar\phi$, we need to show that
\begin{equation}
	\label{eq:psi2-regularity}
	\psi_2 \in W^{\frac{1}{k'},k}(\Gamma) \quad \text{for all } k \geq 1.
\end{equation}
To this purpose, we deduce from \eqref{eq:stationary-v} and the definition of $H_2$ that
\begin{equation}
	\label{eq:psi2-express}
	\begin{aligned}[b]
		\psi_2 & = -\frac{1}{\zeta_2'(\bar v)} \left[ \gamma \bar\phi + \Delta_2(\bar v) \right] \\
		& = - H_2'(\zeta_2(\bar v)) \left[ \gamma \bar\phi + \Delta_2(\bar v) \right].
	\end{aligned}
\end{equation}
Thanks to \eqref{eq:Wk-regularity-v}, the fact that $\bar v \in L^\infty(\Gamma)$, \cref{lem:chain-rule}, and \cref{rem:chain-rule}, we deduce from \eqref{eq:lipzeta}, \eqref{eq:H-der-Lipschitz} and the definition of $\Delta_2$ that
\[
	\Delta_2(\bar v), H_2'(\zeta_2(\bar v)) \in W^{\frac{1}{k'},k}(\Gamma) \quad \text{for all } k \geq 1.
\]
Besides, \eqref{eq:W1k-regularity-phi} and the trace operator give $\gamma \bar \phi \in W^{\frac{1}{k'},k}(\Gamma)$ for all $k \geq 1$. Applying \cref{lem:production} to \eqref{eq:psi2-express} for $\tau = 1 - \frac{1}{k}, \tau_1 =\tau_2 = 1- \frac{1}{2k}$ and $k_1 = k_2 =2k$ finally yields \eqref{eq:psi2-regularity}. 

From \eqref{eq:psi2-regularity} and the fact that $\psi_1 \in L^\infty(\Omega)$ according to \eqref{eq:L-infty-psi1}, we can conclude that the right hand-side term of \eqref{eq:adjoint-state} belong to $L^\infty(\Omega) \times W^{\frac{1}{k'}, k}(\Gamma)$ for all $k\geq 1$.
The regularity of solutions to \eqref{eq:adjoint-state} again implies that
\begin{equation}
	\label{eq:W2k-regularity-adjoint}
	\bar \phi \in  W^{2,k}(\Omega) \quad \text{for all } k\geq 1.
\end{equation}
This ensures the Lipschitz continuity of $\bar\phi$ on $\overline\Omega$. Since $\bar\omega_i = \Delta_i^{-1}(-\bar\phi)$ and $\Delta_i^{-1}$, $i=1,2$, are Lipschitz continuous, we derive from \eqref{eq:projection-u}, \eqref{eq:projection-v} and the Lipschitz continuity of $\bar y$ that $\bar u$ and $\bar v$ are, respectively, Lipschitz continuous on $\overline{\Omega}$ and $\Gamma$. Furthermore, the Lipschitz continuity of  $\psi_2$ thus follows from \eqref{eq:psi2-express} and the Lipschitz continuity of $\bar \phi$, $\bar v$. Similarly, we derive the Lipschitz continuity of $\psi_1$.
$\qed$


\section*{Acknowledgements}
{This work was funded by Vietnam National Foundation for Science and Technology Development (NAFOSTED) under grant 101.01-2019.308. A part of this paper was completed at Vietnam Institute for Advanced Study in Mathematics (VIASM). The first author would like to thank VIASM for their financial support and
	hospitality.}


\printbibliography

\end{document}